\documentclass{article}


\usepackage[utf8]{inputenc} 
\usepackage[T1]{fontenc}    
\usepackage{hyperref}       
\usepackage{url}            
\usepackage{booktabs}       
\usepackage{amsfonts}       
\usepackage{nicefrac}       
\usepackage{microtype}      
\usepackage{xcolor}         

\usepackage{algorithm}
\usepackage{algpseudocode}
\usepackage{placeins}
\usepackage{authblk}
\usepackage[margin=1in]{geometry}
\usepackage{tikz}
\usepackage{multirow}
\usepackage{pgffor}
\usepackage{enumitem}
\usepackage{graphicx}
\usetikzlibrary{arrows}
\usepackage{subcaption}
\usepackage{amsmath, amsthm, amssymb}
\newtheorem{Theorem}{Theorem}
\newtheorem{Proposition}[Theorem]{Proposition}
\newtheorem{Remark}[Theorem]{Remark}

\newtheorem{Corollary}[Theorem]{Corollary}
\newtheorem{Lemma}[Theorem]{Lemma}
\newtheorem{Example}[Theorem]{Example}

\theoremstyle{definition}
\newtheorem{Definition}[Theorem]{Definition}

\newcommand{\define}[1]{\textbf{#1}}

\newcommand{\dgh}{d_{\mathrm{GH}}}
\newcommand{\corr}{\mathcal{R}}
\newcommand{\dsgh}{d_{\mathrm{srGH}}}
\newcommand{\symdsgh}{\widehat{d}_\mathrm{srGH}}
\newcommand{\semcorr}{\mathcal{SR}}
\newcommand{\modgh}{d_\mathrm{mGH}}
\newcommand{\dis}{\mathrm{dis}}
\newcommand{\codis}{\mathrm{codis}}
\newcommand{\coup}{\mathcal{C}}
\newcommand{\semcoup}{\mathcal{SC}}
\newcommand{\dgwp}{d_{\mathrm{GW},p}}
\newcommand{\dsgwp}{d_{\mathrm{srGW},p}}
\newcommand{\symdsgwp}{\widehat{d}_{\mathrm{srGW},p}}
\newcommand{\dhaus}{d_\mathrm{H}}
\newcommand{\dsh}{d_{\mathrm{srH}}}
\newcommand{\R}{\mathbb{R}}

\title{Generalized Dimension Reduction Using Semi-Relaxed Gromov-Wasserstein Distance}

\author[1]{Ranthony A. Clark}
\author[2]{Tom Needham}
\author[3]{Thomas Weighill}
\affil[1]{Department of Mathematics,  Duke University,  Durham NC, 27710} 
\affil[2]{Department of Mathematics,  Florida State University,  Tallahassee FL, 32304}
\affil[3]{Department of Mathematics and Statistics,  University of North Carolina at Greensboro,  Greensboro NC, 27412}

\begin{document}

\maketitle

\begin{abstract}
Dimension reduction techniques typically seek an embedding of a high-dimensional point cloud into a low-dimensional Euclidean space which optimally preserves the geometry of the input data. Based on expert knowledge, one may instead wish to embed the data into some other manifold or metric space in order to better reflect the geometry or topology of the point cloud. We propose a general method for manifold-valued multidimensional scaling based on concepts from optimal transport. In particular, we establish theoretical connections between the recently introduced semi-relaxed Gromov-Wasserstein (srGW) framework and multidimensional scaling by solving the Monge problem in this setting. We also derive novel connections between srGW distance and Gromov-Hausdorff distance. We apply our computational framework to analyze ensembles of political redistricting plans for states with two Congressional districts, achieving an effective visualization of the ensemble as a distribution on a circle which can be used to characterize typical neutral plans, and to flag outliers.
\end{abstract}

\section{Introduction}\label{sec:intro}

Dimension reduction is a fundamental task in unsupervised learning and is frequently a first step in the data exploration pipeline. Typically, dimension reduction is framed as the process of determining an embedding of a finite metric space $(X,d)$ into a low-dimensional Euclidean space $\R^m$ which optimally preserves the geometry of the input data. As a concrete example, for $X = \{x_1,\ldots,x_n\}$, the \define{metric multidimensional scaling (MDS) problem} seeks  a point cloud $\mathrm{MDS}_m(X) \subset \R^m$ satisfying
\begin{equation}\label{eqn:MDS}
\mathrm{MDS}_m(X) \in \underset{y_1,\ldots,y_n}{\mathrm{argmin}} \sum_{i,j=1}^n (d(x_i,x_j) - \|y_i - y_j\| )^2.
\end{equation}

Based on prior knowledge, it may be natural to theorize that a dataset $X$ is noisily sampled from a distribution on a specific low-dimensional manifold in the ambient data space, and the practitioner may wish for this structure to be reflected in the dimension reduction process---indeed, this was the case for the geospatial data studied in Section \ref{sec:redistricting}, which catalyzed the ideas in this paper. A simple observation is that the MDS problem \eqref{eqn:MDS}  still makes sense if $\R^m$ is replaced with some other low-dimensional Riemannian manifold, and the main goal of this paper is to develop a theoretical and computational framework for this manifold-valued variant of MDS. Another observation is that the objective of the MDS problem is very similar to that of the Gromov-Wasserstein (GW) distance from optimal transport theory~\cite{memoli2011gromov}, and our main theoretical result makes this connection precise. 

\paragraph{Contributions.} The contributions of this paper are centered on theoretical results which tie together several ideas from classical and more recent literature. The impetus for the paper was the problem of visualizing complex redistricting datasets; our solution provides an extended computational example that illustrates how our framework can be used to gain important insights into non-Euclidean data. More specifically, our main contributions are:

\begin{itemize}[leftmargin=*]
    \item We extend the formulation of semi-relaxed Gromov-Wasserstein distance introduced in~\cite{vincent2022semi} to a 1-parameter family of $L^p$-type distances defined for general metric measure spaces (including continuous spaces). We then show in Corollary \ref{cor:connection_to_MDS} that the semi-relaxed GW problem generalizes the MDS problem \eqref{eqn:MDS} in several ways. This is based on Theorem \ref{thm:monge}, which shows that the semi-relaxed GW distance is realized by a Monge map in very general situations, thus adding to the growing recent literature on the existence of Monge maps in the GW framework (see Remark \ref{remark:monge}).

    \item We develop and unify the theory of generalized MDS and semi-relaxed GW distances by exhibiting connections to variants of the Gromov-Hausdorff (GH) distance that have appeared previously in the literature. Theorem \ref{thm:equivalent_mGH} and Theorem \ref{thm:srgh_equals_srgw} together show that a symmetrized version of the $p=\infty$ semi-relaxed GW distance is equal to the modified GH distance of~\cite{memoli2012some}, which, in turn, appeared in classical work on generalized MDS~\cite{bronstein2006generalized}.

    \item By drawing the connection between the MDS and srGW problems, we are able to design an efficient algorithm (SRGW+GD) for computing MDS embeddings into manifolds, consisting of an initial discretized optimal transport computation followed by a gradient descent stage. This allows general target spaces, and produces significantly better embeddings (quantitatively and qualitatively) than a naive gradient descent algorithm (Table \ref{tab:distortiontable} and Figure \ref{fig:scatterplots}). 
    
    \item Using experiments on the MNIST dataset, we show that SRGW+GD matches or outperforms SMACOF MDS for Euclidean target spaces. We demonstrate its effectiveness for embeddings into spheres on a dataset of rotated MNIST images, and a set of GPS coordinates of cities. 
    
    \item In the final section, we apply SRGW+GD to ensembles of political districting plans for low-population states, achieving a natural and effective visualization of each ensemble as a distribution on a circle. These visualizations lead to pertinent insights into the distribution of possible districting plans for each state.
\end{itemize}

\section{Semi-Relaxed Gromov-Wasserstein Distance and Multidimensional Scaling}\label{sec:gwdistances}

\paragraph{Gromov-Wasserstein Distances.}
Let $(X,d_X,\mu_X)$ and $(Y,d_Y,\mu_Y)$ be \define{metric measure spaces (mm-spaces)}; that is, $(X,d_X)$ is a metric space, which we assume to be complete and separable, and $\mu_X$ is a Borel probability measure on $X$. We will sometimes abuse notation and simply write $X$ for $(X,d_X,\mu_X)$ when it is clear that there is an associated choice of metric and measure.

A \define{coupling} of $\mu_X$ and $\mu_Y$ is a Borel probability measure $\gamma$ on $X \times Y$ with marginals equal to $\mu_X$ and $\mu_Y$, respectively. Writing this symbolically, $(\pi_1)_\# \gamma = \mu_X$ and $(\pi_2)_\# \gamma = \mu_Y$, where $\pi_1:X \times Y \to X$ and $\pi_2:X \times Y \to Y$ are the coordinate projection maps and $(\pi_1)_\# \gamma$ denotes the pushforward measure. We denote the set of all measure couplings between $\mu_X$ and $\mu_Y$ as $\coup(\mu_X,\mu_Y)$. For $p \in [1,\infty)$, the \define{Gromov-Wasserstein (GW) $p$-distance}~\cite{10.2312:SPBG:SPBG07:081-090,memoli2011gromov} is 
\[
\dgwp(X,Y) = \inf_{\gamma \in \coup(\mu_X,\mu_Y)} \dis_p(\gamma),
\]
where the \define{$p$-distortion} of $\gamma$, $\dis_p(\gamma)$, is given by

\begin{align}
& \dis_p(\gamma) = \frac{1}{2} \|\Gamma_{X,Y}\|_{L^p(\gamma \otimes \gamma)} \label{eqn:p_distortion} \\ 
& = \frac{1}{2} \left( \iint | \Gamma_{X,Y}(x,y,x',y')|^p \, d\gamma(x,y) \, d\gamma(x', y') \right)^{1/p} \nonumber 
\end{align}
with $\Gamma_{X,Y}(x,y,x',y') = d_X(x,x') - d_Y(y,y')$. 

This extends to the $p = \infty$ case, where the distortion is equal to 
\begin{align}
\dis_\infty(\gamma) 
& = \frac{1}{2} \|\Gamma_{X,Y}\|_{L^\infty(\mathrm{supp}(\gamma) \times \mathrm{supp}(\gamma))} \nonumber \\ 
& = \frac{1}{2} \sup_{(x,y),(x',y') \in \mathrm{supp}(\gamma)} |\Gamma_{X,Y}(x,y,x',y')| \nonumber,
\end{align}
with $\mathrm{supp}(\gamma)$ denoting the support of the measure $\gamma$.

The GW distance was introduced in~\cite{10.2312:SPBG:SPBG07:081-090}, where it was shown that $\dgwp$ defines a metric on the space of measure-preserving isometry classes of fully supported compact mm-spaces.

\paragraph{Semi-Relaxed GW Distances.} GW distances have become a popular tool in the machine learning community, due to their ability to compare distinct data types~\cite{peyre2016gromov,chowdhury2021generalized,
xu2019scalable}. Many variants of GW distances have been introduced in recent years~\cite{scetbon2022linear,chowdhury2021quantized,chowdhury2023hypergraph,titouan2020co,vayer2020fused}. Of particular interest here is the \emph{semi-relaxed Gromov-Wasserstein (srGW) distance} of~\cite{vincent2022semi}. There, the $p=2$ version was defined for finite spaces, and we give a natural generalization here.

\begin{Definition}[Semi-Relaxed Gromov-Wasserstein Distance]
    Let $X = (X,d_X,\mu_X)$ and $Y = (Y,d_Y,\mu_Y)$ be mm-spaces and let $p \in [1,\infty]$. A \define{semi-coupling} of $\mu_X$ and $Y$ is a Borel probability measure $\gamma$ on $X \times Y$ such that $(\pi_1)_\# \gamma = \mu_X$ (note that this definition doesn't depend on the measure on $Y$). The set of semi-couplings will be denoted $\semcoup(\mu_X,Y)$. The \define{semi-relaxed Gromov-Wasserstein (srGW) $p$-distance} is 
    \begin{equation}\label{eqn:srGW}
    \dsgwp(X,Y) = \inf_{\gamma \in \semcoup(\mu_X,Y)} \dis_p(\gamma),
    \end{equation}
    where the $p$-distortion is as defined in \eqref{eqn:p_distortion}.    
\end{Definition}

We call $\dsgwp$ a ``distance" in an informal sense. It is clearly asymmetric and, in fact, we will show in Section~\ref{sec:gromov-type_distances} that it satisfies the triangle inequality if and only if $p=\infty$. For now, we consider \eqref{eqn:srGW} as an interesting optimization problem. We show below that it is closely related to MDS \eqref{eqn:MDS}.

\paragraph{Monge Maps and Generalized MDS.}

Let $(X,d_X,\mu_X)$ be a mm-space and $Y$ a Polish space. Given a measurable function $f:X \to Y$, we define the \define{semi-coupling induced by $f$} to be the measure $\mu_f$ on $X\times Y$ given by
$\mu_f := (\mathrm{id}_X \times f)_\# \mu_X$,
where $\mathrm{id}_X \times f: X \to X \times Y$ is the function $x \mapsto (x,f(x))$. In the case that $X$ is finite, this measure is given explicitly by
\[
\mu_f = \sum_{x \in X} \mu_X(x) \delta_{(x, f(x))}.
\]
Here, and throughout the rest of the paper, we use $\delta_z$ to denote the Dirac mass at a point $z \in Z$. 

We recall that a metric space is called \define{proper} if its closed and bounded sets are compact. An action of a group $G$ on a metric space $X$ is said to be \define{cocompact} if there exists a compact $K \subseteq X$ such that $X$ is covered by translates of $K$ under the $G$-action. The following is our main result.

\begin{Theorem}[Existence of Monge Maps]\label{thm:monge}
Let $(X,d_X,\mu_X)$ be a metric measure space with $X$ finite and $\mu_X$ fully supported and let $(Y,d_Y)$ be a proper metric space with a cocompact action by isometries by some group $G$. Then for any $p \in [1,\infty]$, there exists a function $f: X \to Y$ such that 
\[
\dsgwp(X,Y) = \dis_p(\mu_f).
\]
Moreover, if $p < \infty$, then any semi-coupling with the same distortion as $\mu_f$ is induced by a function.
\end{Theorem}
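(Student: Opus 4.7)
My approach splits into two stages: first attaining the infimum in \eqref{eqn:srGW} by some semi-coupling $\gamma^*$, and then showing $\gamma^*$ is induced by a function.

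For \textbf{existence}, let $\gamma_k$ be a minimizing sequence. Since $X$ is finite, each $\gamma_k$ disintegrates as $\gamma_k = \sum_i \mu_X(x_i)\, \delta_{x_i} \otimes \eta_{i,k}$. Because $d_Y$ is $G$-invariant, translating the $Y$-coordinate by any $g \in G$ preserves both the semi-coupling condition and the distortion, so after fixing a compact $K_0 \subseteq Y$ with $G \cdot K_0 = Y$ we are free to anchor each $\gamma_k$ near $K_0$. For $p < \infty$, the diagonal contribution $\mu_X(x_i)^2 \iint d_Y(y,y')^p\, d\eta_{i,k}(y)\, d\eta_{i,k}(y')$ to $\dis_p(\gamma_k)^p$ is uniformly bounded, so averaging yields $y_{0,k} \in \mathrm{supp}(\eta_{0,k})$ with $\int d_Y(\cdot, y_{0,k})^p\, d\eta_{0,k} \leq M$. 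Translating so that $y_{0,k} \in K_0$, Markov's inequality delivers tightness of $\eta_{0,k}$, and bounds on the off-diagonal terms $\iint|d_X(x_0,x_i) - d_Y(y,y')|^p\, d\eta_{0,k}(y)\, d\eta_{i,k}(y')$ propagate tightness to each $\eta_{i,k}$. Prokhorov then yields a weak subsequential limit $\gamma^* \in \semcoup(\mu_X, Y)$, and lower semicontinuity of the $L^p$ distortion under weak convergence (the integrand is nonnegative and continuous) shows $\gamma^*$ is optimal. The $p = \infty$ case is easier: the distortion directly bounds the diameter of $\mathrm{supp}(\gamma_k)$, and properness of $Y$ allows immediate extraction.

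For the \textbf{Monge property}, write $\gamma^* = \sum_i \mu_X(x_i)\, \delta_{x_i} \otimes \eta_i$, fix an index $i$, set $H_{ij}(y) := \int |d_X(x_i,x_j) - d_Y(y,y')|^p\, d\eta_j(y')$ and $G_i := \sum_{j \neq i} \mu_X(x_j) H_{ij}$, and let $\gamma_{y^*}$ denote the semi-coupling obtained by replacing $\eta_i$ with $\delta_{y^*}$. A direct expansion shows
\[
2^p \bigl[\dis_p(\gamma_{y^*})^p - \dis_p(\gamma^*)^p\bigr] = 2\mu_X(x_i)\Bigl[G_i(y^*) - \int G_i\, d\eta_i\Bigr] - \mu_X(x_i)^2 \iint d_Y(y,y')^p\, d\eta_i(y)\, d\eta_i(y').
\]
By the mean-value principle, some $y^* \in Y$ satisfies $G_i(y^*) \leq \int G_i\, d\eta_i$, so the right-hand side is non-positive, and is strictly negative whenever $\eta_i$ is not a Dirac. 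This contradicts optimality of $\gamma^*$, forcing each $\eta_i$ to be a Dirac and yielding both the Monge map $f$ and the ``moreover'' statement. For $p = \infty$ one simply picks any $f(x_i) \in \mathrm{supp}(\eta_i)$: every pair $((x_i, f(x_i)),(x_j, f(x_j)))$ lies in $\mathrm{supp}(\gamma^*) \times \mathrm{supp}(\gamma^*)$, so $\dis_\infty(\mu_f) \leq \dis_\infty(\gamma^*)$.

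The \textbf{main obstacle} is the tightness argument for $p < \infty$: the distortion constraint is only an integral inequality, so compactness of supports is not automatic, and tightness must be transferred from an anchor measure $\eta_{0,k}$ to the remaining $\eta_{i,k}$ via Markov-type estimates on the off-diagonal distortion terms, leaning on properness of $Y$ at each step. Once existence is secured, the variational argument in Stage 2 delivers the Monge property and its uniqueness clause cleanly.
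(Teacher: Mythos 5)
Your proposal is correct but takes a genuinely different route from the paper. The paper first proves a replacement lemma: from any semi-coupling $\gamma$ of finite distortion one can build a map $f$ with $\dis_p(\mu_f)\leq\dis_p(\gamma)$ \emph{and with a quantitative bound on $\mathrm{diam}(\mathrm{image}(f))$ in terms of $\mathrm{diam}_p(X)$ and $\min_x\mu_X(x)$}. It then picks a compact $K\subset Y$ large enough that any set of diameter below that bound fits inside a $G$-translate of $K$, observes that the space $K^X$ of maps is compact (so $\dsgwp(X,K)$ is attained by some $f$), and argues by contradiction: a better semi-coupling in $Y$ would, via the lemma, produce a map with small image which can be translated back into $K$, contradicting optimality of $f$. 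You instead run the direct method: translate a minimizing sequence via the $G$-action to anchor the disintegration kernels in a compact neighborhood, use the uniform distortion bound plus Markov's inequality to get tightness, Prokhorov to extract a weak limit $\gamma^*$, and lower semicontinuity to confirm it is optimal; only then do you run the replacement argument (your Stage 2) to show every kernel $\eta_i$ of $\gamma^*$ must be a Dirac. Both proofs hinge on the same kernel-replacement idea, but the two compactness mechanisms are different: the paper's is elementary (compactness of $K^X$ plus the Lebesgue-number trick) and requires the explicit diameter bound, whereas yours is the standard weak-compactness route and therefore needs Prokhorov and LSC but avoids the diameter estimate entirely. A further small economy on your side: because you apply the replacement to an already-optimal $\gamma^*$, you only need some $y^*$ with $G_i(y^*)\leq\int G_i\,d\eta_i$, which the mean-value principle gives for free; the paper, which applies replacement to arbitrary semi-couplings, must show the analogous function actually attains its infimum, requiring the coercivity estimate via properness. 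The two approaches are of comparable length and both are valid under the stated hypotheses.

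One point worth tightening in a written version: in Stage 1, lower semicontinuity of $\gamma\mapsto\dis_p(\gamma)^p$ under weak convergence uses that $\gamma_k\to\gamma^*$ weakly implies $\gamma_k\otimes\gamma_k\to\gamma^*\otimes\gamma^*$ weakly and that the integrand $|d_X-d_Y|^p$ is nonnegative and continuous; for $p=\infty$ the correct statement is that $\mathrm{supp}(\gamma^*)$ lies in the Kuratowski limit inferior of $\mathrm{supp}(\gamma_k)$, which gives $\dis_\infty(\gamma^*)\leq\liminf_k\dis_\infty(\gamma_k)$. These are standard but should be said explicitly.
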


The proof is included in the Appendix. The main idea is that, from any initial coupling, one can construct a new one of the form $\mu_f$ with lower distortion via disintegration of the initial coupling.

\begin{Remark}
    The theorem applies when $Y = \mathbb{R}^n$, endowed with Euclidean distance (a main motivating example), but also to a large class of spaces including compact metric spaces (e.g.~closed Riemannian manifolds) or infinite binary trees. Regarding the source space $X$, our proof requires $X$ to be finite; removing this assumption, even in the Euclidean case, seems difficult -- see e.g.~\cite{murray2024probabilistic}. 
\end{Remark}

\begin{Remark}[The Monge Problem]\label{remark:monge}
    We will refer to the map $f$ in Theorem \ref{thm:monge} as a \define{Monge map}, in reference to the original formulation of optimal transport, due to Monge, where optimization was performed over measure-preserving maps, rather than over couplings (see~\cite{villani2021topics}). The \define{Monge problem} in OT theory is to determine conditions under which the Wasserstein distance is realized by a measure-preserving map; the problem is now well-understood, with several general results, e.g.,~\cite{brenier1987decomposition} and~\cite[Theorem 10.41]{villani2009optimal}. The Monge problem in the GW setting is an active area of current research. The state-of-the-art results in this direction appear in~\cite{dumont2024existence}, where the problem is solved for variants of the $p=2$ GW distance between measures on Euclidean spaces with density. Other recent results for (variants of) the Monge problem for GW distance between more restrictive subclasses of mm-spaces appear in~\cite{vayer2020contribution,sturm2023space,beinert2023assignment,salmona2022gromov,memoli2024comparison}.
\end{Remark}

\begin{Example}
    Without the assumptions on $Y$ made in Theorem \ref{thm:monge}, there may be no Monge map from $X$ to $Y$. Indeed, let $X = \{0,1\} \subseteq \mathbb{R}$ and let $Y = \{(0,2n) \mid n \in \mathbb{N} \} \cup \{(1+2^{-n},2n) \mid n \in \mathbb{N} \}$. Since $X$ does not isometrically embed into $Y$, there is no zero distortion map from $X$ to $Y$. However, the maps $f_n: X \to Y$ given by $f(0) = (0,2n),\ f(1) = (1+2^{-n}, 2n)$ have arbitrarily low distortion.
\end{Example}

We have the following corollary, showing that the srGW problem generalizes the MDS problem.

\begin{Corollary}\label{cor:connection_to_MDS}
    Let $(X,d)$ be a finite metric space with $X = \{x_1,\ldots,x_n\}$ and let $\mu$ be uniform measure on $X$. Let $f:X \to \R^m$ be a function which realizes $d_{\mathrm{srGW},2}(X,\R^m)$. Then the point cloud $f(x_1),\ldots,f(x_n) \in \R^m$ is a solution of $\mathrm{MDS}_m(X)$. 
\end{Corollary}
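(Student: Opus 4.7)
The plan is to reduce the srGW optimization to an optimization over Monge maps using Theorem \ref{thm:monge}, and then observe that the Monge-map-restricted objective is proportional to the MDS objective. First, I would verify that the hypotheses of Theorem \ref{thm:monge} apply with $Y = \R^m$ (endowed with Euclidean distance): $\R^m$ is proper, and it acts on itself cocompactly by translation (which is isometric), while $X$ is finite and $\mu$ is fully supported since it is uniform. Therefore Theorem \ref{thm:monge} guarantees that $\dsgwp(X, \R^m)$ (with $p=2$) is realized by a Monge map, and moreover that \emph{any} semi-coupling achieving the infimum is induced by a function. In particular, the given $f$ satisfies $\dsgwp(X,\R^m) = \dis_2(\mu_f)$.

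Next, I would expand $\dis_2(\mu_g)$ for an arbitrary function $g:X \to \R^m$. Since $\mu_g = \frac{1}{n}\sum_{i=1}^n \delta_{(x_i, g(x_i))}$, substitution into the definition \eqref{eqn:p_distortion} of the $p$-distortion yields
\[
\dis_2(\mu_g)^2 = \frac{1}{4n^2} \sum_{i,j=1}^n \bigl( d(x_i,x_j) - \|g(x_i) - g(x_j)\| \bigr)^2.
\]
This is exactly the MDS objective in \eqref{eqn:MDS} evaluated at the point cloud $(g(x_1), \ldots, g(x_n))$, up to the positive multiplicative constant $\tfrac{1}{4n^2}$.

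Finally, combining these two observations, since $f$ minimizes $\dis_2(\mu_g)$ over all functions $g:X\to\R^m$ (equivalently, over all Monge semi-couplings, which by Theorem \ref{thm:monge} is no more restrictive than minimizing over all semi-couplings), the tuple $(f(x_1),\ldots,f(x_n))$ minimizes the MDS objective over all point clouds $(y_1,\ldots,y_n) \in (\R^m)^n$. That is, $f(x_1),\ldots,f(x_n) \in \mathrm{MDS}_m(X)$.

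The only potential obstacle is a bookkeeping one: ensuring that the infimum over Monge maps really does equal the infimum over all semi-couplings, so that the srGW-minimizing $f$ actually minimizes the (restricted) Monge-map objective. But this is immediate from Theorem \ref{thm:monge}, since that theorem tells us the srGW infimum is attained by \emph{some} Monge map, forcing the two infima to coincide. Everything else is a direct unpacking of definitions.
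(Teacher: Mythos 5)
Your proof is correct and matches the paper's (implicit) reasoning: the paper states the corollary without a written-out proof, treating it as an immediate consequence of Theorem~\ref{thm:monge} and the definitions, which is exactly what you unpack. The hypothesis check ($\R^m$ proper, translation action cocompact and isometric, $\mu$ uniform hence fully supported) and the computation
\[
\dis_2(\mu_g)^2 = \frac{1}{4n^2}\sum_{i,j=1}^n \bigl(d(x_i,x_j) - \|g(x_i)-g(x_j)\|\bigr)^2
\]
are both correct. One small simplification worth noting: the final step does not actually require Theorem~\ref{thm:monge} at all. Since $\mu_g$ is a semi-coupling for every $g:X\to\R^m$, the hypothesis $\dis_2(\mu_f) = \dsgwp(X,\R^m) = \inf_{\gamma\in\semcoup(\mu_X,\R^m)}\dis_2(\gamma)$ already gives $\dis_2(\mu_f)\le\dis_2(\mu_g)$ directly, with no need to argue that the two infima (over Monge maps versus over all semi-couplings) coincide. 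Theorem~\ref{thm:monge} is needed only to guarantee that such an $f$ exists in the first place, which the corollary's statement takes as given.
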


\paragraph{Related Work.} The semi-relaxed GW problem was first studied in~\cite{vincent2022semi}, where it was applied to graph machine learning problems such as dictionary learning and graph completion. The followup paper~\cite{van2024distributional} considers theoretical aspects of srGW and, in particular, connections to dimension reduction. Their result~\cite[Theorem 3.2]{van2024distributional} is analogous to Corollary~\ref{cor:connection_to_MDS} and says that spectral methods of dimensional reduction can be realized as solutions to semi-relaxed GW problems. This covers, for example, classical multidimensional scaling (as opposed to metric MDS, studied here). These spectral methods are only applicable to Euclidean (or perhaps hyperbolic) embedding spaces, so that the target applications of~\cite{van2024distributional} and this paper are fairly disjoint. Our theoretical results are related to and complementary with those of~\cite{van2024distributional}, but neither paper generalizes the other.

The problem of dimension reduction into a non-Euclidean space has been well-studied in the topological data analysis literature. These approaches are fundamentally different than the one used here; they rely on specific constructions of algebraic topology and are therefore only suited to embedding in specific classes of spaces such as circles~\cite{de2009persistent,paik2023circular}, projective spaces~\cite{perea2018multiscale}, or lens spaces~\cite{polanco2019lens}. More importantly, the results of these algorithms are qualitatively different than ours, as they are based on persistent (co)homology rather than on geometry preservation. We compare our results to one of these methods, circular coordinates, in Section \ref{sec:experiments}.

The Gromov-Hausdorff distance is another useful tool for shape comparison and analysis where shapes can be modeled as (compact) metric spaces, versus the Gromov-Wasserstein setting which considers the distributional properties of a sample via metric measure spaces. It is used in areas such as manifold learning, computer vision, computational geometry, and topological data analysis which emphasize the geometry and topological properties of data. Our work is closely related to the influential paper~\cite{bronstein2006generalized}, which approaches the problem of partial surface matching through a certain ``semi-relaxed" version of Gromov-Hausdorff distance. This connection inspired the work in the next section, which derives a precise relationship between the version of Gromov-Hausdorff distance considered in~\cite{bronstein2006generalized} and the srGW distance introduced of~\cite{vincent2022semi}. 

\section{Connections to Gromov-Hausdorff Distance}\label{sec:gromov-type_distances}

\paragraph{Semi-Relaxed Gromov-Hausdorff Distance.} Let $X = (X,d_X)$ and $Y = (Y,d_Y)$ be metric spaces. Recall that the \define{Gromov-Hausdorff (GH) distance} between $X$ and $Y$ is defined by 
\begin{equation}\label{eqn:gromov_hausdorff}
\dgh(X,Y) = \inf_{R \in \corr(X,Y)} \dis(R),
\end{equation}
where $\corr(X,Y)$ is the set of \define{correspondences} between $X$ and $Y$---that is, the set of relations $R \subset X \times Y$ such that the coordinate projection maps take $R$ surjectively onto each component---and $\dis(R)$ is the \define{metric distortion} of the correspondence $R$, defined by
\begin{equation}\label{eqn:metric_distortion}
\dis(R) = \frac{1}{2} \sup_{(x,y),(x',y') \in R} |d_X(x,x') - d_Y(y,y')|.
\end{equation}

Inspired by the semi-relaxed version of Gromov-Wasserstein distance considered above, we now define a semi-relaxed version of Gromov-Hausdorff distance.

\begin{Definition}[Semi-Relaxed Gromov-Hausdorff Distance]\label{def:semi_relaxed_GH}
    Let $X$ and $Y$ be metric spaces. A relation $R \subset X \times Y$ is called a \define{semi-correspondence} if the coordinate projection to $X$ takes $R$ surjectively onto $X$ (but we put no such condition on the projection map to $Y$). Let $\semcorr(X,Y)$ denote the set of semi-correspondences. 
    The \define{semi-relaxed Gromov-Hausdorff (srGH) distance} is 
    \[
    \dsgh(X,Y) = \inf_{R \in \semcorr(X,Y)} \dis(R),
    \]
    with $\dis(R)$ defined as in \eqref{eqn:metric_distortion}. This is symmetrized as
    \[
    \symdsgh(X,Y) = \max \{\dsgh(X,Y),\dsgh(Y,X)\}.
    \]   
\end{Definition}

\paragraph{Equivalence to Modified Gromov-Hausdorff Distance.} We will now show that the symmetrized srGH distance is a reformulation of a distance which has already appeared in the literature. Recall (see, e.g.,  \cite{burago2022course}) that the GH distance can be expressed as
\begin{equation}\label{eqn:GH_reformulation}
\dgh(X,Y) =  \inf_{f,g} \max \{\dis(f), \dis(g), \codis(f,g)\},
\end{equation}
where the infimum is over (not necessarily continuous) functions $f:X \to Y$ and $g:Y \to X$, the \define{function distortion} $\dis(f)$ is defined by
\[
\dis(f) = \frac{1}{2} \sup_{x,x' \in X} |d_X(x,x') - d_Y(f(x),f(x'))|,
\]
with $\dis(g)$ defined similarly, and where the \define{codistortion} $\codis(f,g)$ is defined by
\[
\codis(f,g) = \frac{1}{2} \sup_{x \in X, y \in Y} |d_X(x,g(y)) - d_Y(y,f(x))|.
\]
This formulation leads to a natural modification, where the maps $f$ and $g$ are decoupled by dropping the codistortion term in \eqref{eqn:GH_reformulation}. The \define{modified Gromov-Hausdorff distance} is 
\begin{align*}
\modgh(X,Y) & = \inf_{f,g} \max\{\dis(f),\dis(g)\} \\ & = \max \left\{\inf_{f:X \to Y} \dis(f), \inf_{g:Y \to X} \dis(g)\right\}.
\end{align*}
This distance was introduced in \cite{memoli2012some}, where it was shown to be a metric on the space of isometry classes of compact metric spaces~\cite[Theorem 4.1]{memoli2012some}. Our next main result shows that it is the same as the symmetrized semi-relaxed GH distance.

\begin{Theorem}[Equivalence of Gromov-Hausdorff Distances]\label{thm:equivalent_mGH}
    The symmetrized semi-relaxed Gromov-Hausdorff distance $\symdsgh$ is equal to the modified Gromov-Hausdorff distance $\modgh$.
\end{Theorem}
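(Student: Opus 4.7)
The plan is to establish the one-sided identity
\[
\dsgh(X,Y) \;=\; \inf_{f:X \to Y} \dis(f).
\]
Once this is in hand, swapping the roles of $X$ and $Y$ yields the analogous identity for $\dsgh(Y,X)$, and taking the maximum of both equalities matches the two expressions for $\symdsgh(X,Y)$ and $\modgh(X,Y)$ term by term.

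For the upper bound $\dsgh(X,Y) \leq \inf_{f} \dis(f)$, given any function $f: X \to Y$ I would use its graph
\[
R_f \;=\; \{(x,f(x)) : x \in X\} \;\subseteq\; X \times Y
\]
as a test semi-correspondence. The projection $\pi_1 : R_f \to X$ is obviously surjective, so $R_f \in \semcorr(X,Y)$, and a direct comparison of the sup in \eqref{eqn:metric_distortion} with the definition of $\dis(f)$ gives $\dis(R_f) = \dis(f)$. Taking the infimum over $f$ gives the desired inequality.

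For the lower bound, I would take an arbitrary semi-correspondence $R \in \semcorr(X,Y)$ and extract a function from it via choice. For each $x \in X$, the fiber $R_x := \{y \in Y : (x,y) \in R\}$ is nonempty by surjectivity of $\pi_1|_R$, so I may pick $f(x) \in R_x$ (using the axiom of choice), obtaining a function $f: X \to Y$ whose graph lies in $R$. Since the sup in the definition of $\dis(f)$ ranges only over pairs $((x,f(x)),(x',f(x'))) \in R \times R$, we immediately get $\dis(f) \leq \dis(R)$, and taking the infimum over $R$ yields $\inf_{f} \dis(f) \leq \dsgh(X,Y)$.

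There is no real obstacle here: the theorem is essentially the observation that, modulo the axiom of choice, a semi-correspondence $R$ with $\pi_1(R) = X$ is the same data as a function $X \to Y$ together with additional pairs that can only enlarge the distortion. No compactness, properness or measure-theoretic hypothesis on $X$ or $Y$ is needed, so the proof is a short two-way inequality together with the symmetry argument combining the two sides into the symmetrized distances.
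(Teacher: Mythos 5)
Your proof is correct and takes essentially the same approach as the paper's: you use the graph $R_f$ of a function for the upper bound and a choice function selecting from each fiber of a semi-correspondence for the lower bound, then symmetrize. The only cosmetic difference is that you state the one-sided identity $\dsgh(X,Y) = \inf_{f:X \to Y} \dis(f)$ explicitly before taking maxima, whereas the paper interleaves the two sides; the underlying argument is identical.
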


The proof is given in the Appendix. We also provide an additional characterization of semi-relaxed GH distance in terms of isometric embeddings in the Appendix.

\paragraph{Connection to Semi-Relaxed Gromov-Wasserstein Distance.} There is an apparent relationship between the $p=\infty$ version of (semi-relaxed) Gromov-Wasserstein distance and (semi-relaxed) Gromov-Hausdorff distance. Indeed, given mm-spaces $(X,d_X,\mu_X)$ and $(Y,d_Y,\mu_Y)$ with fully-supported measures, any coupling $\gamma \in \coup(\mu_X,\mu_Y)$ induces a correspondence $\mathrm{supp}(\gamma)$, and it follows that 
\begin{equation}\label{eqn:GH_lower_bound}
\dgh(X,Y) \leq d_{\mathrm{GW},\infty}(X,Y),
\end{equation}
where the quantity on the left is understood as the Gromov-Hausdorff distance between the underlying metric spaces. However, the inequality \eqref{eqn:GH_lower_bound} is not an equality, in general---see~\cite[Theorem 5.1]{memoli2011gromov}. The following result shows that equality does hold in the semi-relaxed setting. We define the \define{symmetrized srGW distance} $\hat{d}_{\mathrm{srGW},p}$ by 
\[
\widehat{d}_{\mathrm{srGW},p}(X,Y) = \max\{\dsgwp(X,Y), \dsgwp(Y,X)\}.
\]

\begin{Theorem}[Equivalence of srGW and srGH]\label{thm:srgh_equals_srgw}
    Let $X$ and $Y$ be mm-spaces such that $\mu_X$ has full support. Then 
    \[
    \widehat{d}_{\mathrm{srGW},\infty}(X,Y) = \widehat{d}_{\mathrm{srGH}}(X,Y) = \modgh(X,Y).
    \]
    Moreover, $\widehat{d}_{\mathrm{srGW},p}$ defines a metric on the space of isometry classes of compact metric spaces which is topologically equivalent to Gromov-Hausdorff distance on any GH precompact family of compact metric spaces when $p = \infty$, but does not define a pseudometric for $p < \infty$. 
\end{Theorem}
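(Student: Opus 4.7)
The rightmost equality $\widehat{d}_{\mathrm{srGH}}=\modgh$ is precisely Theorem~\ref{thm:equivalent_mGH}, so the plan is to establish $\widehat{d}_{\mathrm{srGW},\infty}=\widehat{d}_{\mathrm{srGH}}$ and then handle the metric and pseudometric claims. The key observation driving the first equality is that for $p=\infty$ the distortion $\dis_\infty(\gamma)$ depends only on the closed set $\mathrm{supp}(\gamma)\subseteq X\times Y$, which lets the srGW objective mirror the srGH objective. For the inequality $\dsgwp(X,Y)\ge \dsgh(X,Y)$ when $p=\infty$, given any $\gamma\in\semcoup(\mu_X,Y)$ I would verify that $R:=\mathrm{supp}(\gamma)$ is a semi-correspondence: full support of $\mu_X$ forces $\overline{\pi_1(R)}=X$, and under the compactness hypotheses present in the metric claim (or a closed-map argument when $Y$ is compact) this upgrades to $\pi_1(R)=X$; then $\dis_\infty(\gamma)=\dis(R)\ge\dsgh(X,Y)$ is immediate.

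The reverse inequality is the main technical hurdle: starting from a semi-correspondence $R\in\semcorr(X,Y)$, I must manufacture a semi-coupling supported inside (the closure of) $R$. The plan is to apply a Kuratowski--Ryll-Nardzewski-type measurable selection theorem to the nonempty closed-valued multimap $x\mapsto \{y:(x,y)\in\overline R\}$, using that $Y$ is Polish, producing a Borel map $f:X\to Y$ with $(x,f(x))\in\overline R$ for every $x$. The induced semi-coupling $\mu_f=(\mathrm{id}\times f)_{\#}\mu_X$ then satisfies $\mathrm{supp}(\mu_f)\subseteq\overline R$, and joint continuity of the integrand forces $\dis_\infty(\mu_f)\le\dis(\overline R)=\dis(R)$. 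Taking the max over both orderings of $X$ and $Y$ and invoking Theorem~\ref{thm:equivalent_mGH} yields the full chain $\widehat{d}_{\mathrm{srGW},\infty}=\widehat{d}_{\mathrm{srGH}}=\modgh$. The main obstacle here is verifying the selection theorem's hypotheses in sufficient generality; in particular, one should decide at the outset whether to work with $R$ or $\overline R$ so that the multimap is genuinely closed-valued.

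The metric and topological claims for $p=\infty$ then transport along this equality: $\modgh$ is a metric on isometry classes of compact metric spaces by \cite[Theorem~4.1]{memoli2012some}, and the elementary inequality $\modgh\le\dgh$ together with GH-precompactness yields topological equivalence on any GH-precompact family via a subsequence argument. To show the pseudometric failure for $p<\infty$, symmetry, nonnegativity, and $\widehat{d}_{\mathrm{srGW},p}(X,X)=0$ (via $\gamma=(\mathrm{id}\times\mathrm{id})_{\#}\mu_X$) all hold trivially, so it suffices to violate the triangle inequality. I would take $X=\{x\}$ a single point, $Y=\{y_1,y_2\}$ with $d_Y(y_1,y_2)=1$ and the non-uniform measure $\mu_Y=0.9\,\delta_{y_1}+0.1\,\delta_{y_2}$, and $Z=\{z_1,z_2\}$ with $d_Z(z_1,z_2)=2$ and uniform measure. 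For $p=2$ a direct optimization over the finite-dimensional simplex of $2\times 2$ coupling matrices yields $\widehat{d}_{\mathrm{srGW},2}(X,Y)\approx 0.212$, $\widehat{d}_{\mathrm{srGW},2}(Y,Z)\approx 0.354$, and $\widehat{d}_{\mathrm{srGW},2}(X,Z)=1/\sqrt{2}\approx 0.707$, so $\widehat{d}_{\mathrm{srGW},2}(X,Z)>\widehat{d}_{\mathrm{srGW},2}(X,Y)+\widehat{d}_{\mathrm{srGW},2}(Y,Z)$; analogous scaled examples cover every $p\in[1,\infty)$.
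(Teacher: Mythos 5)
Your proposal follows the paper's high-level structure --- reduce to $\widehat{d}_{\mathrm{srGW},\infty}=\symdsgh$, invoke Theorem~\ref{thm:equivalent_mGH} and~\cite{memoli2012some}, and exhibit a triangle-inequality counterexample for $p<\infty$ --- but your route to the main equality is genuinely different, and in one respect cleaner. The paper proves $d_{\mathrm{srGW},\infty}\le\dsgh$ by taking the Borel thickening $R_\epsilon$ from Lemma~\ref{lem:correspondence_approximation} and then writing down $\gamma(V)=\mu_X(\mathrm{proj}_X(V\cap R_\epsilon))$; as literally stated this set function is not countably additive unless $R_\epsilon$ is a graph (two disjoint Borel sets can have overlapping $X$-projections inside $R_\epsilon$), and $\mathrm{proj}_X(V\cap R_\epsilon)$ is only analytic rather than Borel in general, so the paper's step is best read as shorthand for a selection argument. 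Your measurable-selection route --- uniformize the closed-graph multimap $x\mapsto\{y:(x,y)\in \overline R\}$ and push $\mu_X$ forward to get $\mu_f$ supported in $\overline R$ --- is the right way to make this rigorous, and it bypasses Lemma~\ref{lem:correspondence_approximation} entirely since $\dis(\overline R)=\dis(R)$ by continuity of the integrand. The price is exactly what you anticipate: the preimages $\{x:\overline R_x\cap U\ne\emptyset\}$ are projections of Borel sets, hence analytic, so you should cite a uniformization theorem valid for analytic graphs (e.g.\ Jankov--von Neumann) and settle for a universally measurable selection, which still suffices to define the pushforward.

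Two residual issues. For the ``easy'' direction $\dsgh\le d_{\mathrm{srGW},\infty}$ you correctly flag that full support of $\mu_X$ only yields $\overline{\pi_1(\mathrm{supp}(\gamma))}=X$, whereas the paper simply asserts $\mathrm{supp}(\gamma)\in\semcorr(X,Y)$; however, your proposed compactness fix covers only the metric claim, not the first sentence of the theorem, which is stated for arbitrary (complete separable) mm-spaces. The fix that works in general is an $\epsilon$-enlargement of $\mathrm{supp}(\gamma)$ in the $X$-direction (in the spirit of Lemma~\ref{lem:correspondence_approximation}), which restores surjectivity of the projection while increasing distortion by $O(\epsilon)$. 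Second, your $p<\infty$ counterexample checks out at $p=2$ but is heavier than necessary, and the assertion that ``analogous scaled examples cover every $p\in[1,\infty)$'' is left unverified. The paper's counterexample is leaner: it takes $Y$ and $Z$ to be the \emph{same} two-point metric space with different fully supported measures, so that $\symdsgwp(Y,Z)=0$ automatically (the semi-relaxed objective ignores the measure on the second argument), reducing the triangle-inequality violation to a one-line comparison of $\symdsgwp(X,Y)$ and $\symdsgwp(X,Z)$ that holds simultaneously for every finite $p$.
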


The theorem is proved in the Appendix. It is based on the observation that any semi-correspondence can be approximated by a measurable semi-correspondence with an arbitrarily small change in distortion.

\begin{Remark}
    The version of generalized MDS used in~\cite{bronstein2006generalized} that we mentioned in the related work section is the asymmetric modified GH problem, $\inf_{f:X \to Y} \mathrm{dis}(f)$ and its $\ell^p$ relaxation, which the authors of that paper apply to find embeddings of subsets of geodesic spaces to aid in partial surface matching. Our results Corollary~\ref{cor:connection_to_MDS} and Theorem~\ref{thm:srgh_equals_srgw} give a cohesive connection between various ideas: the embedding problem considered in~\cite{bronstein2006generalized} is an application of the modified GH distance of~\cite{memoli2012some}, which is equal to the $p=\infty$ version of a semi-relaxed GW distance, whereas the $p=2$ version of srGW first introduced in~\cite{vincent2022semi} generalizes the standard MDS problem.
\end{Remark}

\section{Numerical Implementation and Experiments}\label{sec:experiments}

\paragraph{Implementation.} In this section we describe how to use the semi-relaxed Gromov Wasserstein distance to find an embedding of a finite metric space $(X,d)$ into a smooth Riemannian manifold $Y$. The algorithm is straightforward: in short, we solve a srGW problem to embed $X$ into a predefined finite subset of $Y$, then use this as initialization for gradient descent of the MDS functional~\eqref{eqn:MDS}, with $Y$ as the target metric space, which could be more general than $\R^m$. We now provide some details.

Given a finite metric space $(X,d)$, we construct an embedding $\hat{f}:X \to Y$ as follows. We first pick a discrete finite subset $S \subseteq Y$ to map into. In practice, we often use a grid in some coordinate system for $Y$, and perturb the points slightly to make it easier to solve the resulting optimization problem. We then solve the semi-relaxed Gromov-Wasserstein problem \eqref{eqn:srGW}, $d_{\mathrm{srGW},2}(X,S)$, which, by Theorem~\ref{thm:monge}, yields an optimal Monge map $f:X \to S$. The computation of $d_{\mathrm{srGW},2}(X,S)$ is approximated via the srGW implementation in the Python Optimal Transport package~\cite{flamary2021pot}; although this approximation is not guaranteed to yield a Monge map, we found that it does so in practice.

To compute the required embedding $\hat{f}: X \to Y$, we run a gradient descent, initialized with the embedding $f$ (sometimes with a small perturbation); as we will see below, this drastically improves the likelihood of finding a good local minimum by gradient descent. Let $X = \{x_1, \ldots, x_n\}$ and initialize a set of $n$ points $(y_i)_{1 \leq i \leq n}$ in $Y$ via $y_i = f(x_i)$. We then consider the distortion function $Y^n \to \R$ defined by
\begin{equation}\label{eqn:discretedis}
    (y_1,\ldots,y_n) \mapsto  \frac{1}{2}\sum_{i,j=1}^n \left( d_X(x_i, x_j) - d_Y(y_i, y_j)\right)^2
\end{equation}
(cf.\ the MDS functional \eqref{eqn:MDS}) and use a gradient-based method on $Y$ to find a local minimum $(\hat{y}_1, \hat{y}_2,\ldots,\hat{y}_n)$. Our embedding is then given by $\hat{f}(x_i) = \hat{y}_i$. We refer to this method as \define{SRGW+GD}. In our examples below, we use the Adam optimizer~\cite{kingma2014adam}.

In practice, the desired embedding space $Y$ may only be known up to certain hyperparamters, such as scale. Our main example below will be when $Y$ is a circle of unknown radius. If $Y$ depends on a scale factor (or multiple scale factors), we add the scale factor as an additional variable in \eqref{eqn:discretedis}.

\paragraph{MNIST.} We first benchmark SRGW+GD against other dimension reduction methods in the case when the target space is Euclidean. We use the MNIST dataset consisting of 70,000 $28\times 28$ grayscale images of handwritten digits, which we view as vectors in $\mathbb{R}^{784}$, separated into ten smaller datasets (MNIST0 to MNIST9), one for each digit. We embed each dataset into $\mathbb{R}^2$ using PCA, SMACOF MDS, and SRGW+GD. Table \ref{tab:euclidean} reports the distortion ($\dis_2$) values for each embedding. We see that SRGW+DG and SMACOF achieve similar distortion in all cases, with SRGW+GD performing slightly better. In terms of compute time, we find that SRGW+GD is between 3 and 14 times faster then SMACOF MDS (using scikit-learn). 

\begin{table}[ht]
    \centering
    \begin{tabular}{l|c|c|c|c|}
    & PCA & SMACOF MDS & SRGW+GD \\ \hline
    MNIST0 & 2.580 & 1.556 &  1.554 \\ \hline
    MNIST1 & 1.290 & 0.794 & 0.777 \\ \hline
    MNIST2 & 3.070 & 1.761 & 1.751 \\ \hline
    MNIST3 & 2.792 &  1.594 & 1.586 \\ \hline
    MNIST4 & 2.668 & 1.523 & 1.518 \\ \hline
    MNIST5 & 2.702 & 1.602 & 1.594 \\ \hline
    MNIST6 & 2.549 &  1.495 & 1.477 \\ \hline
    MNIST7 & 2.346 & 1.350 & 1.342\\ \hline
    MNIST8 & 2.901 & 1.646 & 1.623 \\ \hline
    MNIST9 & 2.421 & 1.405 & 1.388 \\ \hline
    \end{tabular}
    \caption{ Distortion for embeddings of MNIST data into $\mathbb{R}^2$.}\label{tab:euclidean}
\end{table}

\paragraph{Rotated MNIST.}

In order to demonstrate the performance of SRGW+GD when the target space is non-Euclidean, we artifically introduce non-linear structure into the MNIST datasets. We apply a random rotation between $0^{\circ}$ and $360^{\circ}$ to each image in each MNIST dataset (filling gaps with black pixels) to create new datasets R-MNIST0 to R-MNIST9. We embed these datasets into $\mathbb{R}^2$ using t-SNE, PCA, SMACOF MDS and SRGW+GD. We also embed each dataset into a circle of unknown radius using SRGW+GD and the following comparison methods:
\begin{itemize}[leftmargin=*]
    \item \define{GD} denotes minimizing (\ref{eqn:discretedis}) with a random initalization and the Adam optimizer. We use 10 random initializations and report the min and max distortion over all trials.
    \item \define{CC} is the circular coordinate method introduced in~\cite{de2009persistent}, which constructs a map from a finite metric space $X$ to $S^1$ using persistent cohomology. We used the density-robust version introduced in~\cite{paik2023circular}. Since the method only produces circular coordinates and not a radius, we estimate the radius as $\max_{x,y \in X} d(x,y)/\pi$. 
    \end{itemize}

Distortion values are contained in Table~\ref{tab:distortiontable}.\footnote{While t-SNE does not aim to reduce distortion and thus is not a fair comparison, it is a helpful contrast for the qualitative behavior of SRGW+GD; we include distortion values for completeness} For embeddings into $\mathbb{R}^2$, we again find that SRGW+GD and SMACOF MDS have similar distortion values, all significantly lower than PCA. SRGW+GD achieves higher distortion on $S^1$ than on $\mathbb{R}^2$ (since the target space has one less dimension and thus captures less variation), but still achieves a lower distortion on $S^1$ than PCA achieves on $\mathbb{R}^2$. SRGW+GD achieves a significantly lower distortion on $S^1$ than CC does, and lower distortion than GD  in all cases except for some trials on R-MNIST1.

We can also gain some insight into how SRGW+GD differs qualitatively from other methods. In Figure \ref{fig:scatterplots}, we show the image of the embedding for the dataset R-MNIST9, colored by the true angle of rotation. We also plot the true angle against the inferred angular coordinate (for $\mathbb{R}^2$ this is taken to be the angle of the point from the $x$-axis). We note that CC maps true rotation angles to angular coordinates in a roughly injective way, thereby capturing the rotation process accurately. t-SNE maps the rotation angle roughly injectively onto a thickened curve in the plane, but inferring an angular coordinate with our naive method does not recover the rotation angle correctly. PCA, MDS and both versions of SRGW+GD map true rotation angles to angular coordinates via a roughly degree two map. This better captures the global geometry, since an image of a $9$ is often closer to its $180^{\circ}$ rotation than its $90^{\circ}$ rotation. Finally, we see that GD recovers none of the rotation structure in the dataset.

\begin{table*}[ht]
    \centering
    \begin{tabular}{l|c|c|c|c||c|c|c|}
    & \multicolumn{4}{|c||}{$\mathbb{R}^2$ embeddings} & \multicolumn{3}{|c|}{$S^1$ embeddings} \\ \hline
         & t-SNE & PCA & SMACOF MDS & SRGW+GD & CC   & GD (min,max)  & SRGW+GD  \\ \hline
         R-MNIST0 & 28.196 & 3.508 & 1.989 & 1.987 & 5.467  & (2.810, 2.812) & 2.534 \\ \hline
         R-MNIST1 & 33.631 & 2.032 & 1.200 & 1.202 & 1.818  & (1.709, 2.247) & 1.702 \\ \hline
         R-MNIST2 & 30.199 & 3.615 & 2.020 & 2.019 & 5.586  & (2.854, 2.856) & 2.591 \\ \hline
         R-MNIST3 & 30.410 & 3.261 & 1.873 & 1.873 & 5.459  & (2.787, 2.790) & 2.439 \\ \hline
         R-MNIST4 & 29.236 & 3.593 & 1.880 & 1.828 & 4.846 & (2.488, 2.490) & 2.329 \\ \hline
         R-MNIST5 & 28.459 & 3.159 & 1.812 & 1.812 & 5.325  & (2.734, 2.736) & 2.401 \\ \hline
         R-MNIST6 & 32.086 & 3.598 & 1.942 & 1.900 & 2.758  & (2.621, 2.623) & 2.437 \\ \hline
         R-MNIST7 & 32.798 & 3.347 & 1.828 & 1.796 & 2.594  & (2.537, 2.539) & 2.324 \\ \hline
         R-MNIST8 & 28.310 & 3.296 &  1.870 & 1.869 & 5.368 & (2.735, 2.737) & 2.418  \\ \hline
         R-MNIST9 & 32.468 & 3.307 & 1.783 & 1.775 & 2.638 & (2.464, 2.466) & 2.280 \\ \hline
    \end{tabular}
    \caption{Distortion ($\dis_2$) for embeddings of randomly rotated MNIST data using various methods. }
    \label{tab:distortiontable}
\end{table*}

\begin{figure*}
\centering
\resizebox{0.95\textwidth}{!}{
    \begin{tikzpicture}[scale=1.0]
        \node[rotate=90] at (-1.1,-2.5) {\tiny angular coord.};
        \node at (0.1,-3.6) {\tiny rotation angle};
        \node at (13.7,-1.5) {\includegraphics[width=1.3cm]{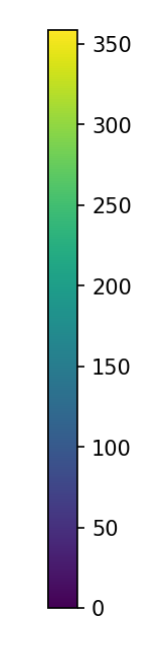}};
        
        \node at (0,1) {t-SNE};
        \node at (0,-0.3) 
        {\includegraphics[width=2cm]{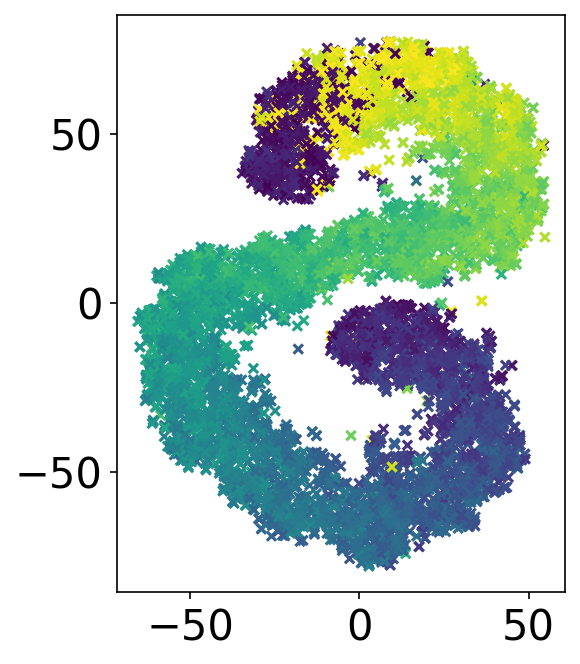}};
        \node at (0,-2.5) 
        {\includegraphics[height=2cm]{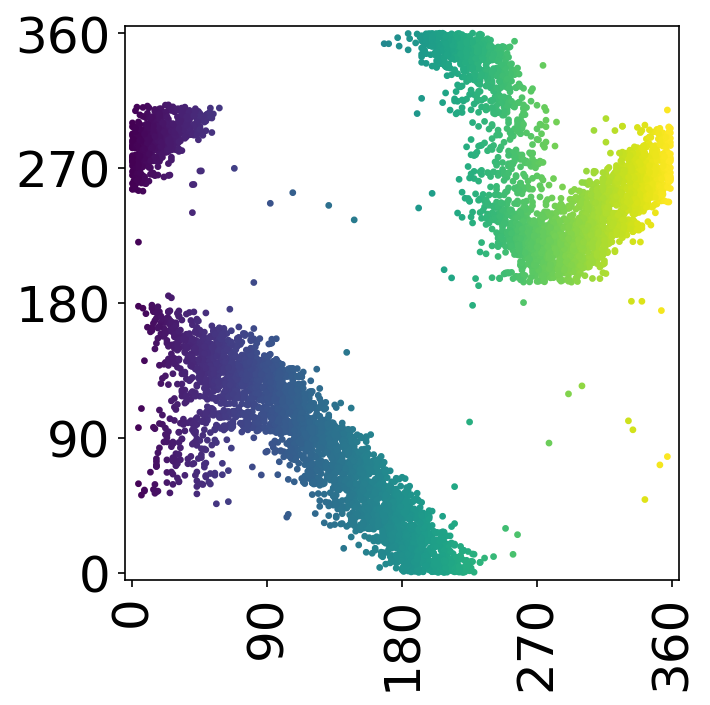}};
        
        \node at (2,1) {PCA};
        \node at (2,-0.3) 
        {\includegraphics[width=2cm]{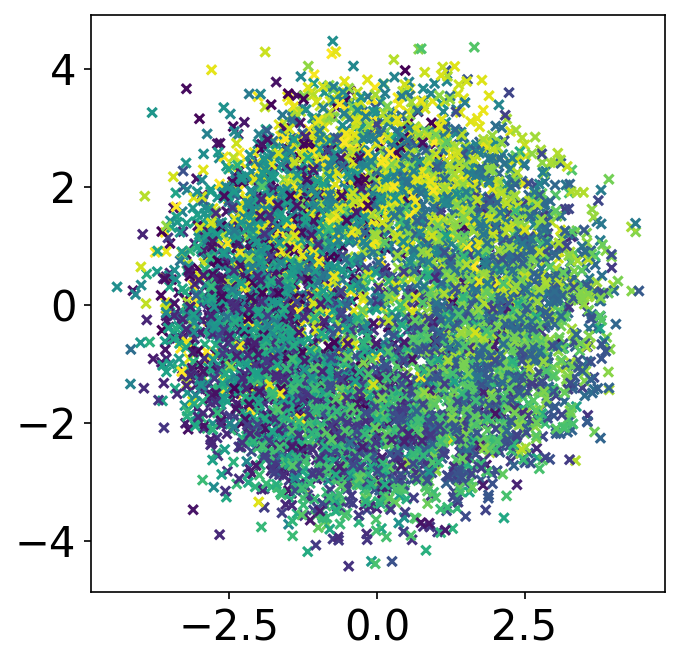}};
        \node at (2,-2.5) {\includegraphics[height=2cm]{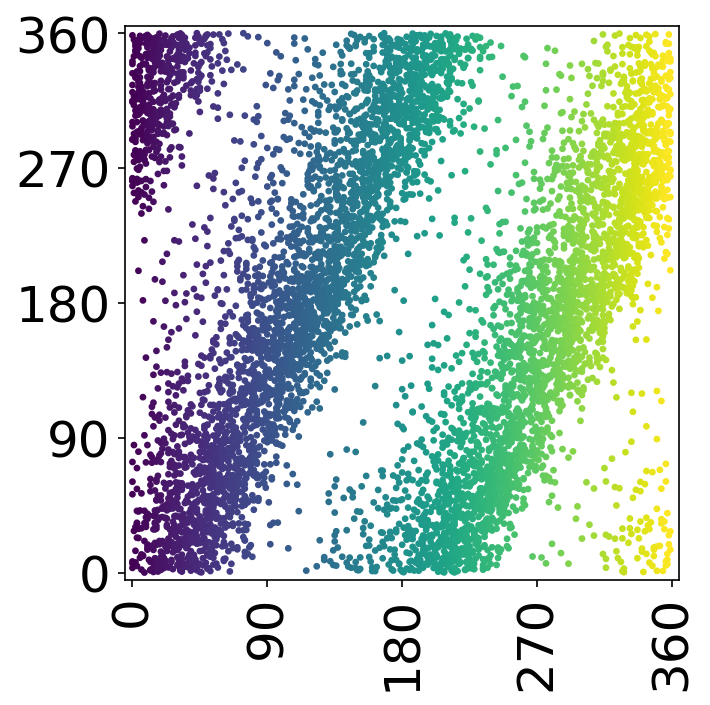}};
        
        \node at (4,1) {MDS};
        \node at (4,-0.3) 
        {\includegraphics[width=2cm]{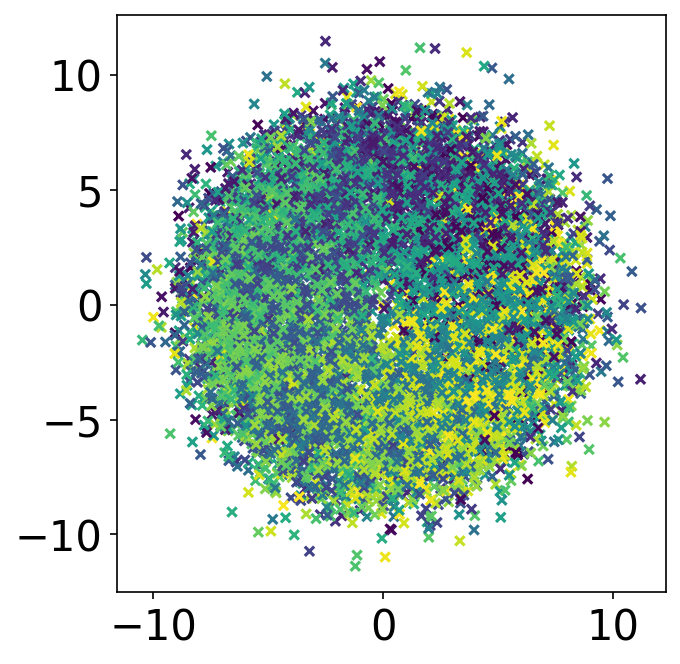}};
        \node at (4,-2.5) {\includegraphics[height=2cm]{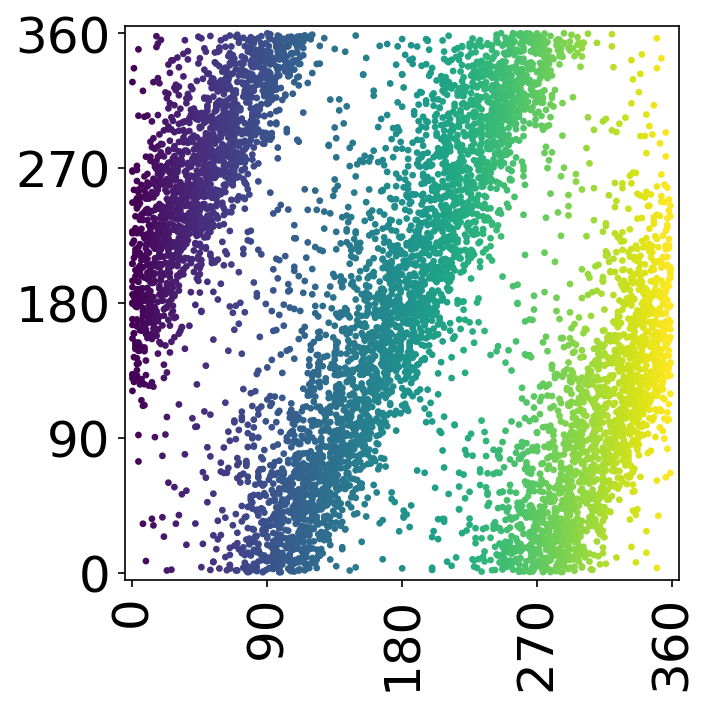}};

        \node at (6,1) {SRGW+GD};
        \node at (6,-0.3) 
        {\includegraphics[width=2cm]{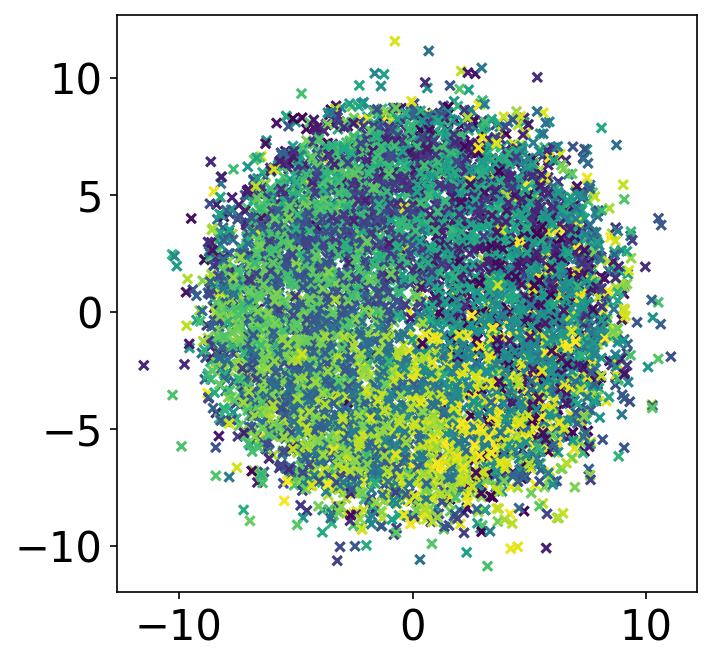}};
        \node at (6,-2.5) {\includegraphics[height=2cm]{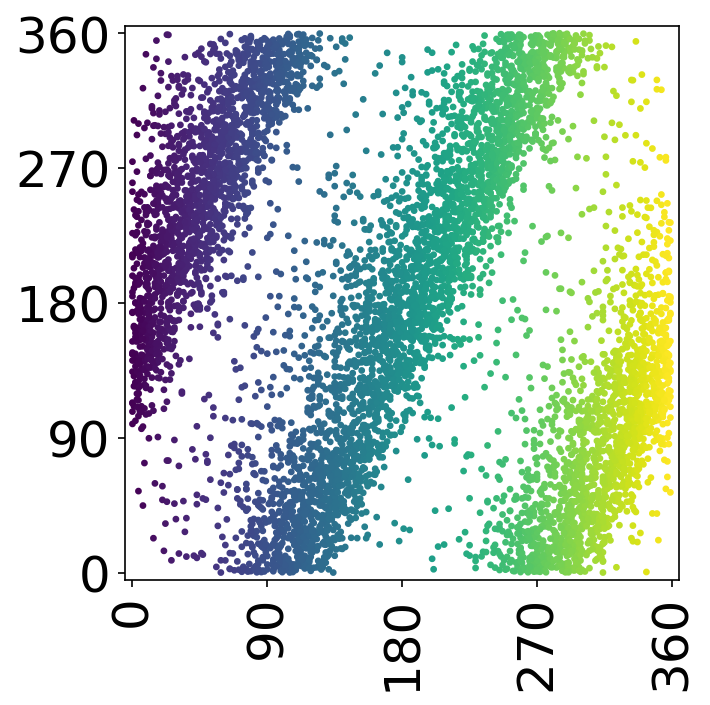}};
        
        \node at (8,1) {CC};
        \node at (8,-0.3) 
        {\includegraphics[width=2cm]{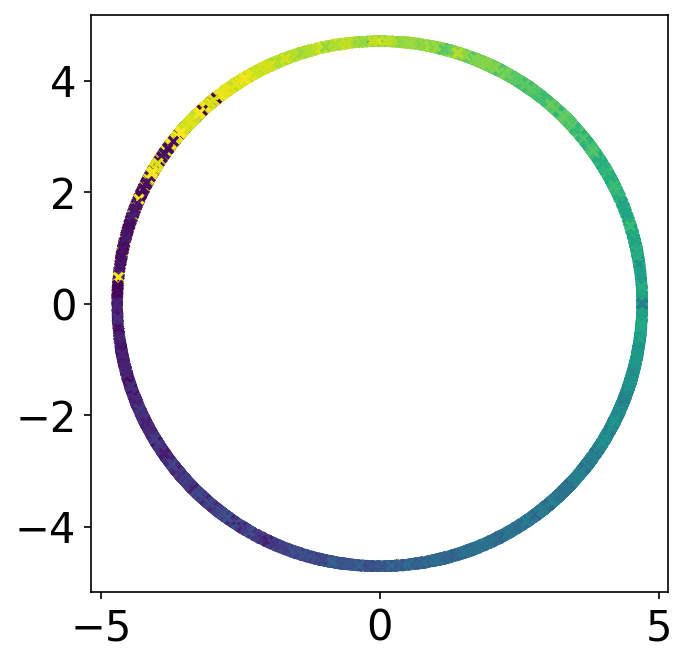}};
        \node at (8,-2.5) {\includegraphics[height=2cm]{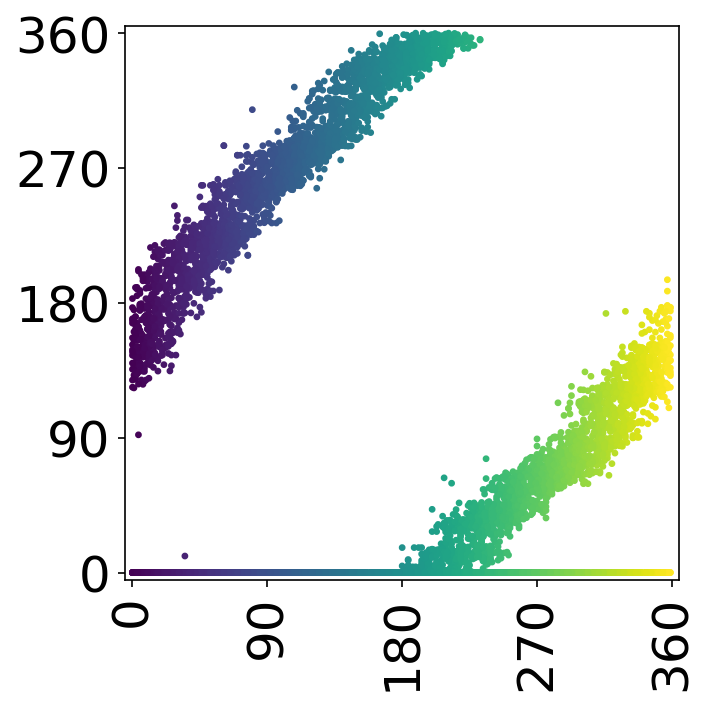}};
        
        \node at (10,1) {GD};
        \node at (10,-0.3) 
        {\includegraphics[width=2cm]{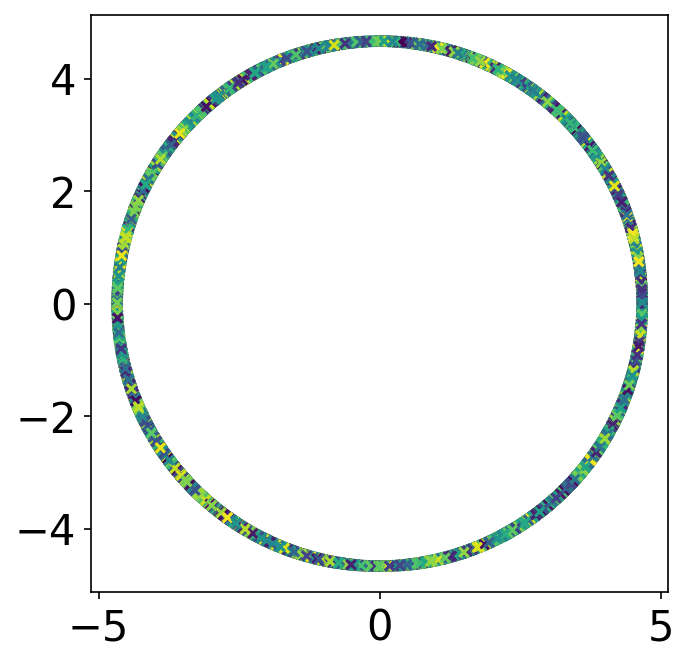}};
        \node at (10,-2.5) {\includegraphics[height=2cm]{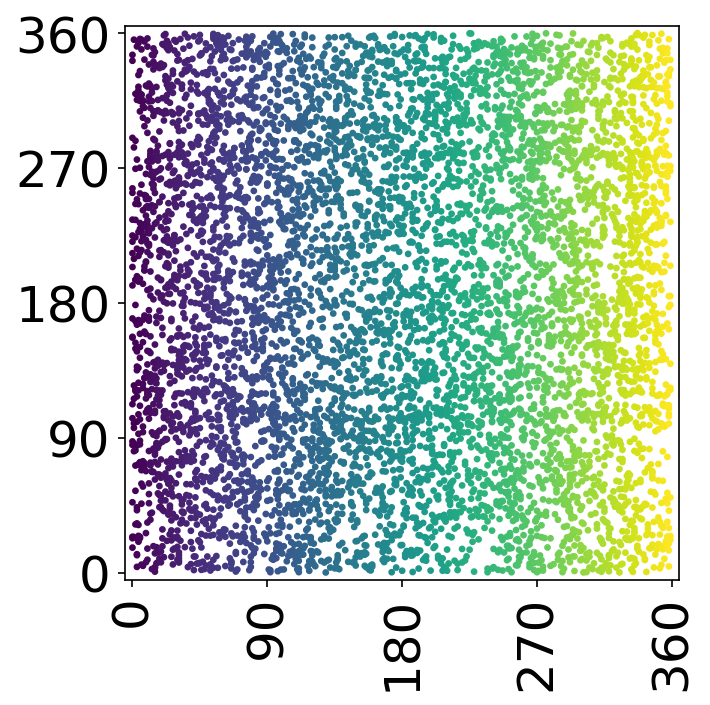}};

        \node at (12,1) {SRGW+GD};
        \node at (12,-0.3) 
        {\includegraphics[width=2cm]{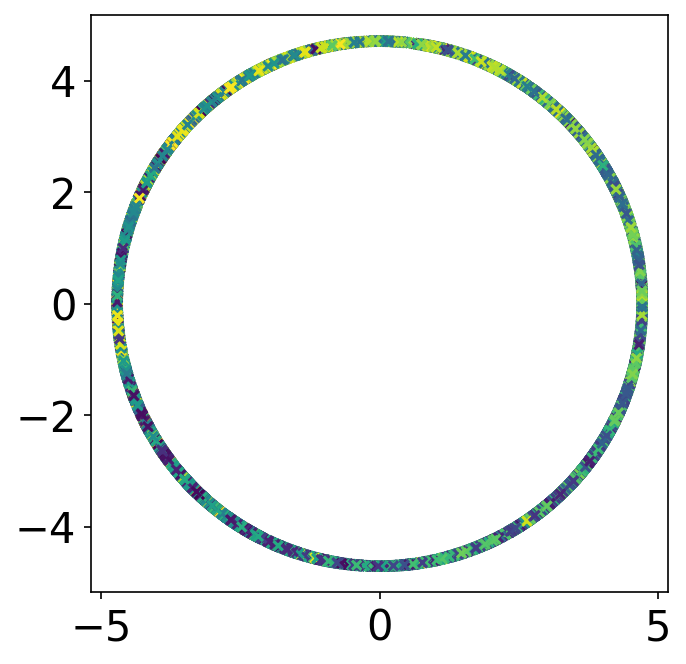}};
        \node at (12,-2.5) {\includegraphics[height=2cm]{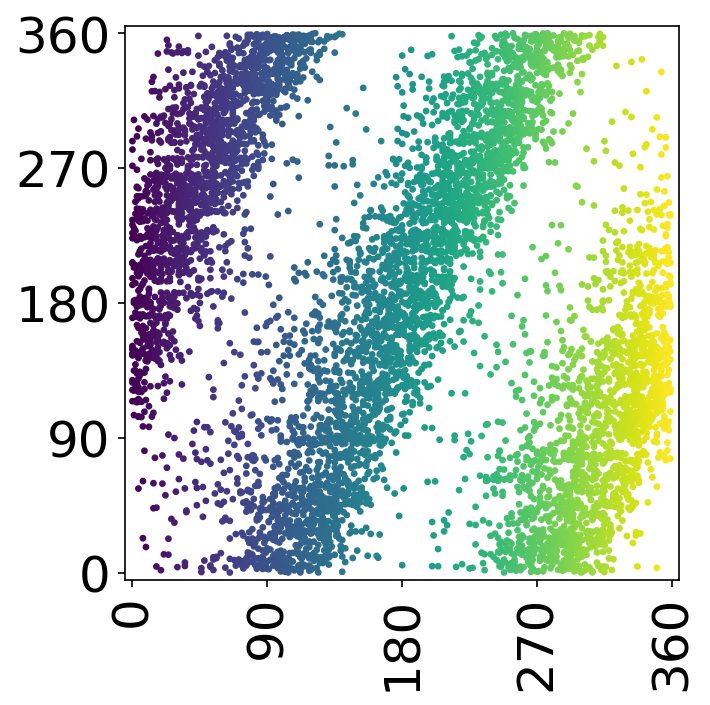}};
    \end{tikzpicture}
    }
    \caption{Circle and planar embeddings for the R-MNIST9 dataset (above), and plots comparing the true angle of rotation vs the inferred angular coordinate from the embedding (below). Color indicates the true angle of rotation.}
    \label{fig:scatterplots}
\end{figure*}

In general, it can be hard to infer an angular coordinate from a planar embedding (see t-SNE in Figure \ref{fig:scatterplots}). Even our naive method above requires finding an appropriate center for the data, which might not be the mean if the data is distributed very unevenly (e.g. in the redistricting application below). The advantage of choosing a circle as the target space is that it produces a well-defined angular coordinate. Our experiments demonstrate that regardless of which target space is preferred, SRGW+GD effectively preserves global geometry. They also demonstrate the necessity of srGW embeddings as an initialization point for gradient descent.

\paragraph{Cities.}

To demonstrate a non-Euclidean embedding where approximate isometric embedding is possible, we use a list of the 20 largest cities\footnote{\url{https://simplemaps.com/data/world-cities}, CC-BY 4.0 license}, with the geodesic distance on the Earth between every pair of cities as ground truth (this distance does not assume the Earth is a perfect sphere, and instead uses the WGS-84 ellipsoid). Using SRGW+GD, we embed this dataset into a sphere of radius 6371 (the average radius of the Earth in kilometers). Figure \ref{fig:earth} shows the embedding. SRGW+GD achieves an embedding that is almost isometric; the pairwise distances between the embedded points never differ by more than $14$ kilometers from the true distance. By contrast, an MDS embedding into $\mathbb{R}^3$ achieves a distortion of $264.724$, indicating the benefit of choosing a target space with the appropriate metric (a geodesic sphere).

\begin{figure}[ht]
    \centering
    \includegraphics[width=0.24\textwidth]{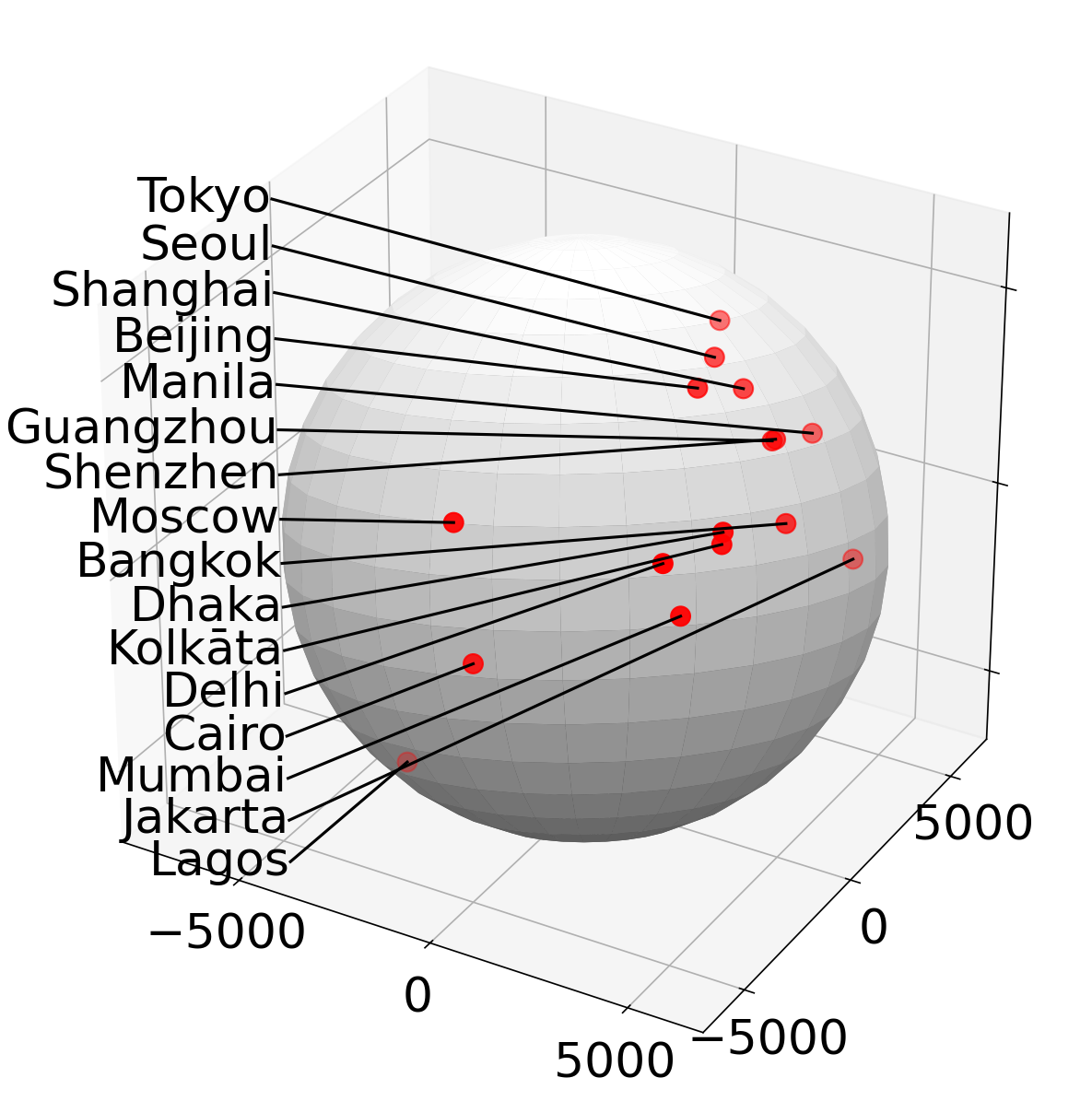}%
    \includegraphics[width=0.24\textwidth]{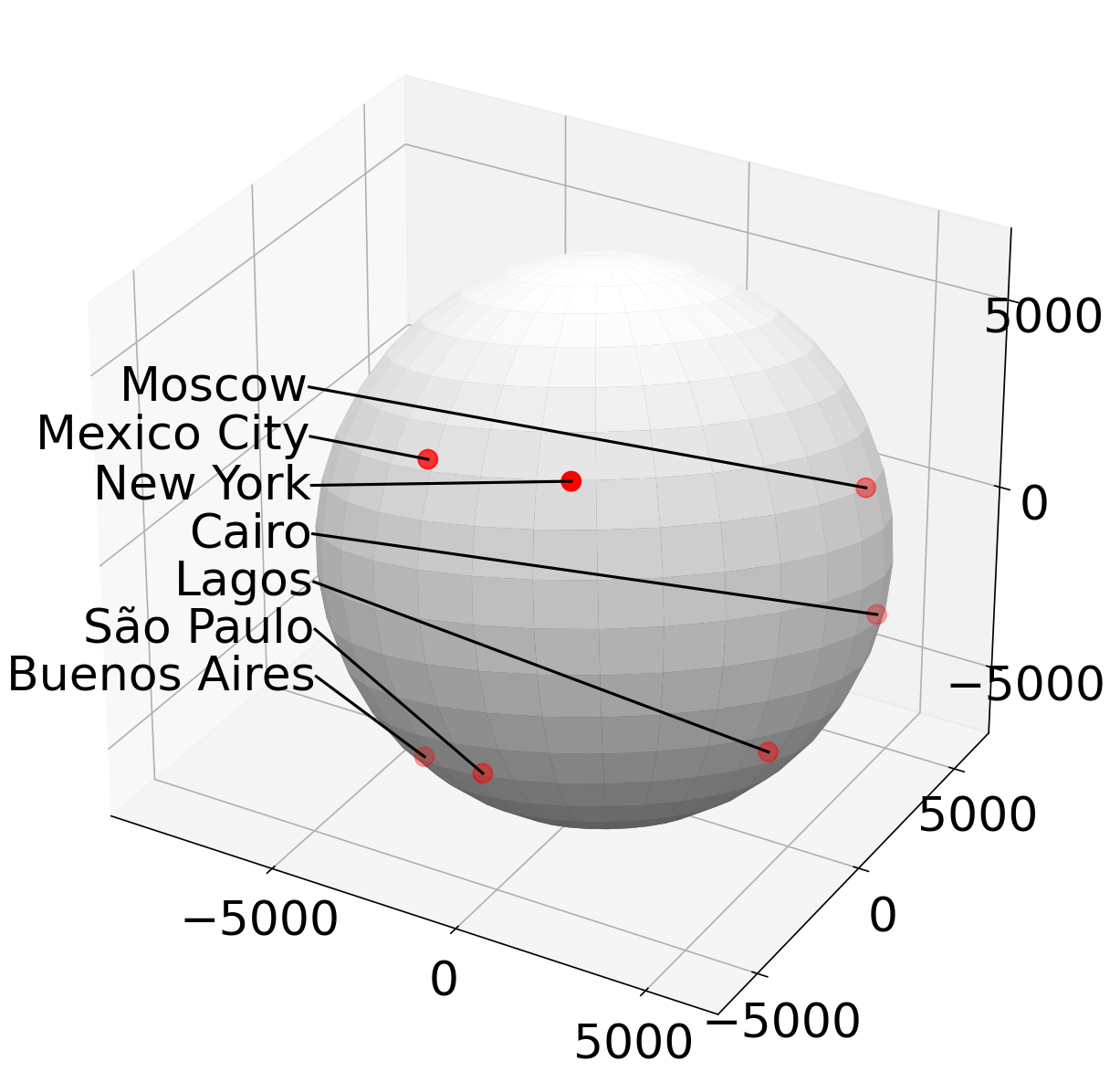}
    \caption{Embedding onto a geodesic sphere of 20 world cities.}
    \label{fig:earth}
\end{figure}

\section{Application to Redistricting}\label{sec:redistricting}

We now demonstrate how an embedding into a natural non-Euclidean target can enable visualization of a complex data set, resulting in important insights, using computational redistricting as our area of application.

\paragraph{Background and Data.} Redistricting is the process of dividing a region into contiguous, equal population districts for the purposes of electing representatives. There has been a lot of recent attention on generating redistricting \define{ensembles} -- large samples from the space of valid redistricting plans for a given U.S.~state~\cite{chen2015cutting, chikina2017assessing, herschlag2020quantifying, deford2021recombination,duchin2022homological}. When analyzed, ensembles can uncover baseline expectations for a typical plan, or be used to flag outliers (some of which might be so-called gerrymanders, i.e.~unfair maps). We will use our SRGW+GD method to visualize ensembles of two-district plans in order to achieve both these goals, similar to the approach  in~\cite{abrishami2020geometry}.

There are currently six states in the contiguous United States with two Congressional districts: Idaho (ID), Maine (ME), Montana (MT), New Hampshire (NH), Rhode Island (RI) and West Virginia (WV). For each of these states we obtained Census blockgroups from \cite{nhgis} and generated an ensemble of 1,000 redistricting plans using the ReCom algorithm~\cite{deford2021recombination}. As our distance between plans we chose a \define{Hamming distance} where the distance between two redistricting plans is defined as the minimum number of Census blockgroups that must be reassigned to change the first plan into the second. Treating the ensemble as a 1,000-point metric space with this distance, we then embed the ensemble into a circle with SRGW+GD. For each embedded ensemble, we plot the image of the embedding as a set of points on the circle, as shown in Figure \ref{fig:circles}. We also display the average division for each part of the circle, and histograms showing the distribution of circular coordinates in each ensemble. In the Appendix, we try other non-linear planar embeddings and find that none of them reveal the circle structure within the data across all states.

\begin{figure}
    \centering
    \begin{subfigure}{0.37\textwidth}
    \includegraphics[width=0.38\textwidth]{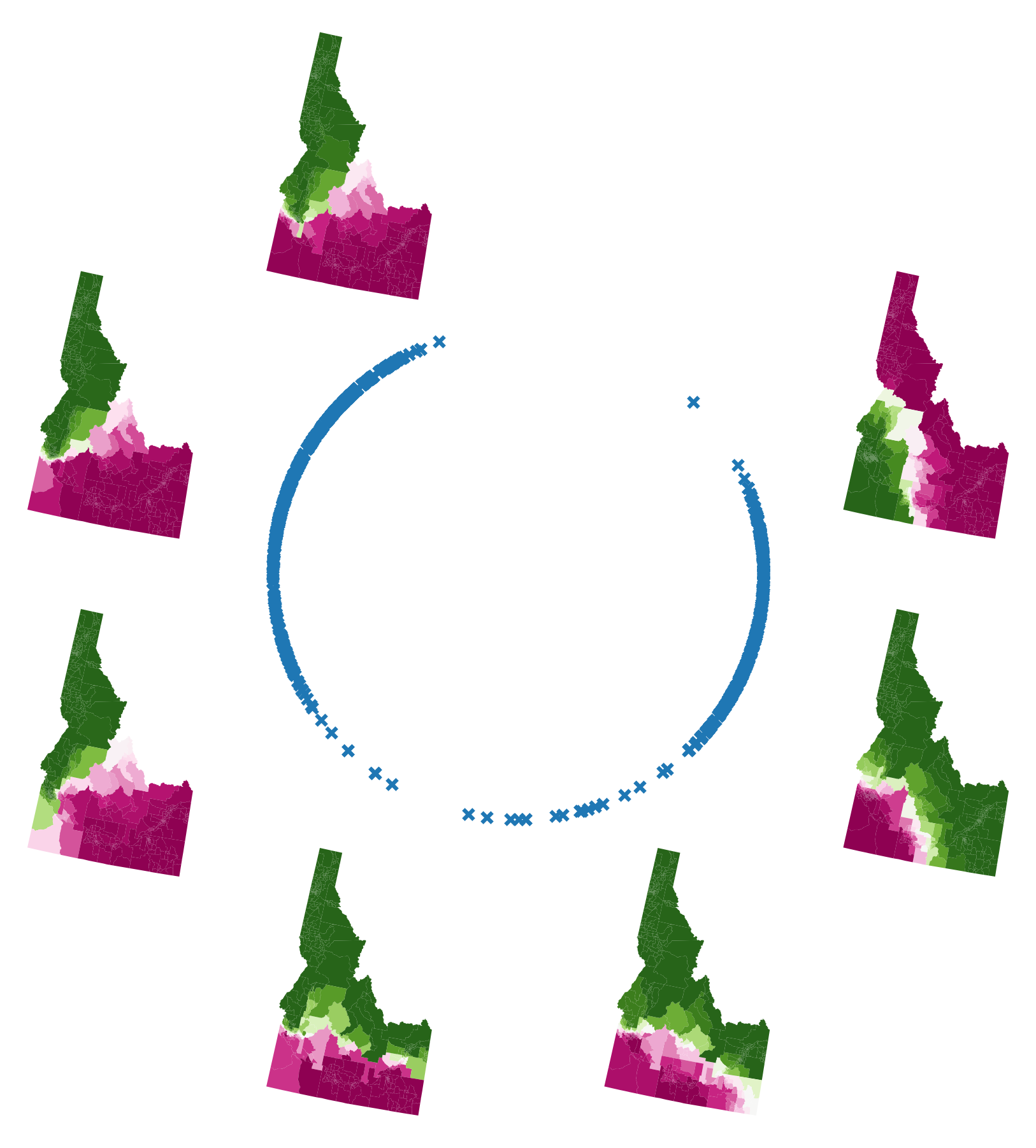}%
    \includegraphics[width=0.6\textwidth]{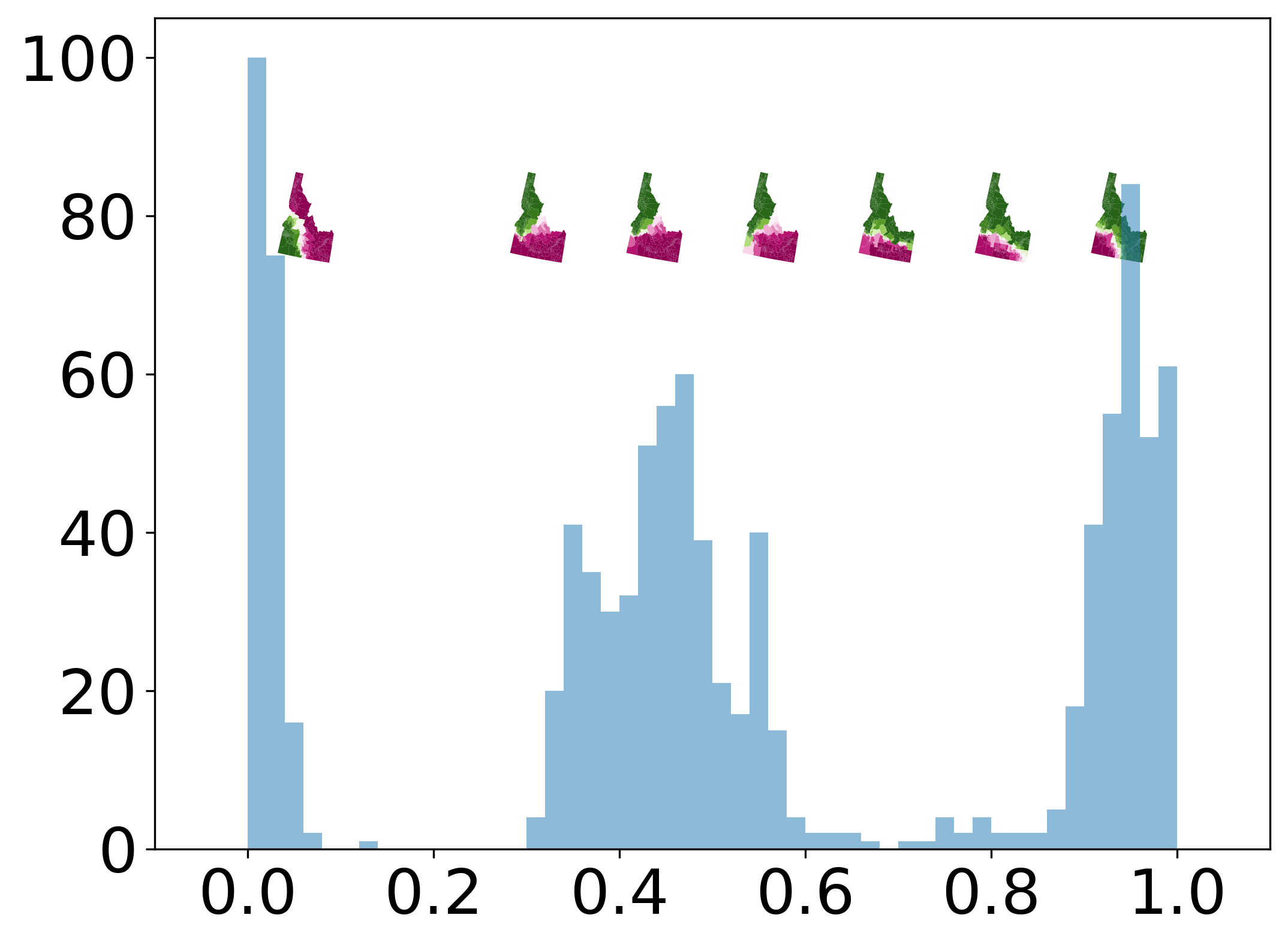}%
    \caption{Idaho (ID)}
    \end{subfigure} %
    \begin{subfigure}{0.37\textwidth}
    \includegraphics[width=0.38\textwidth]{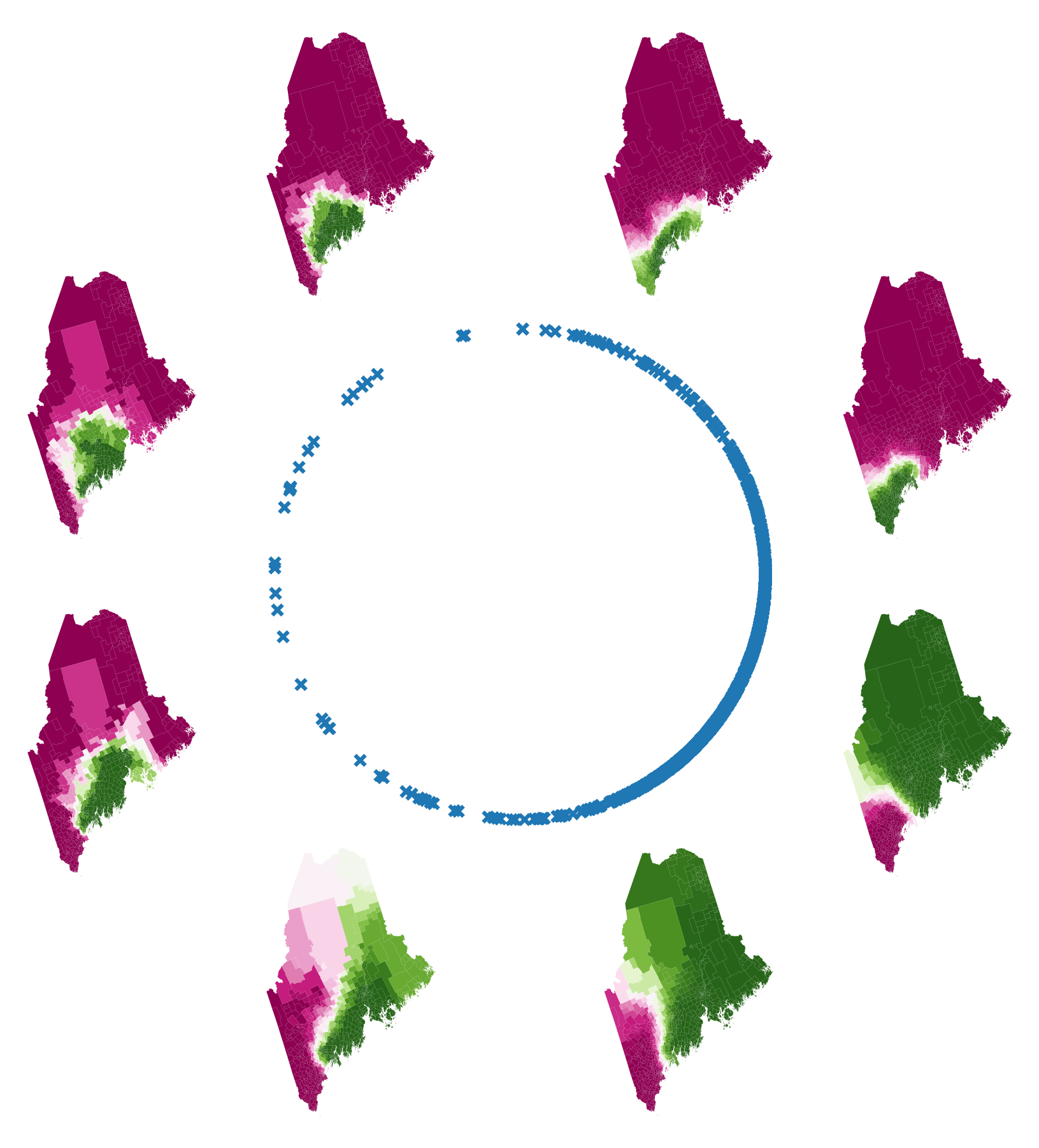}%
    \includegraphics[width=0.6\textwidth]{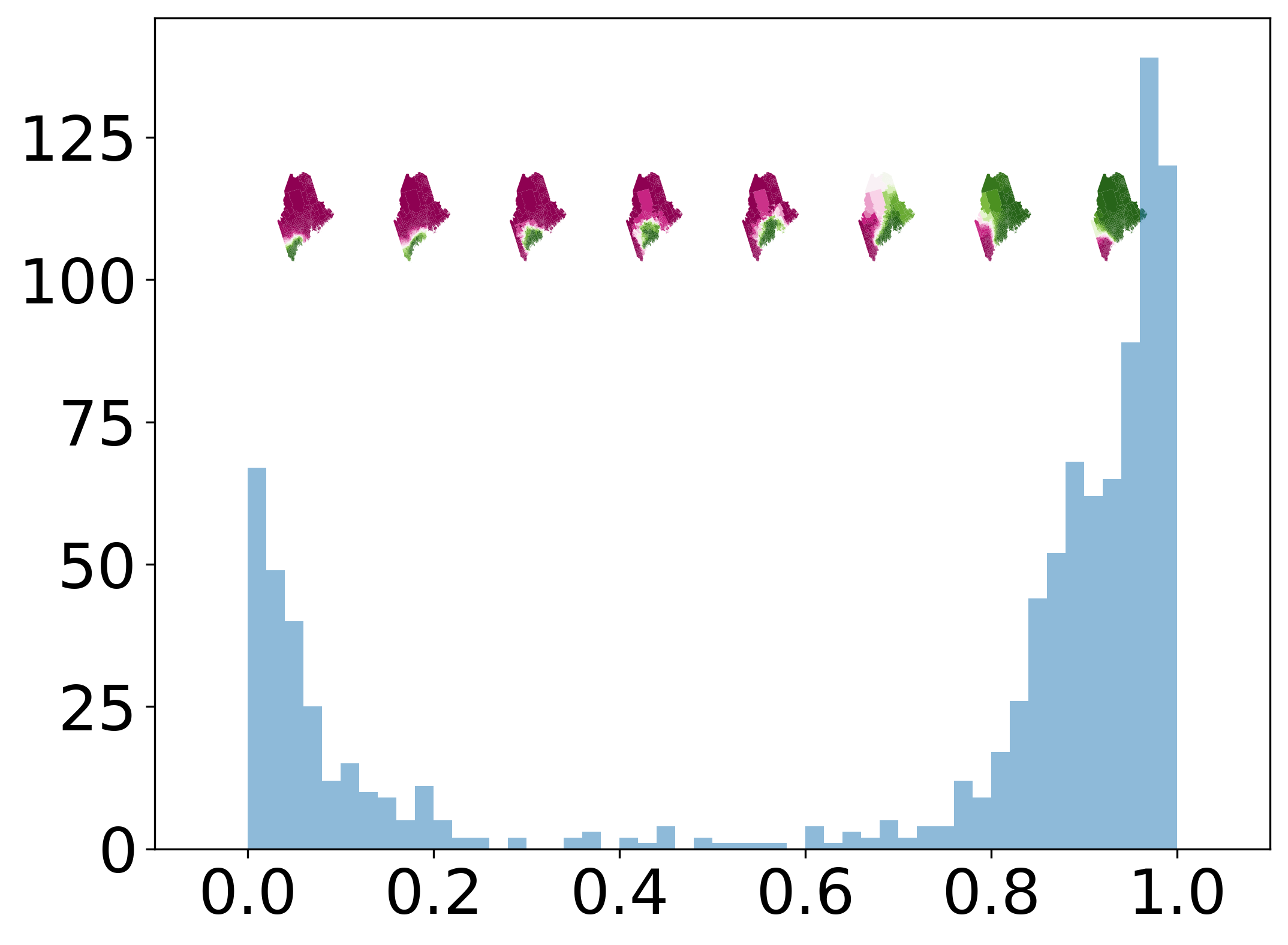}%
    \caption{Maine (ME)}
    \end{subfigure}   
    
    \begin{subfigure}{0.37\textwidth}
    \includegraphics[width=0.38\textwidth]{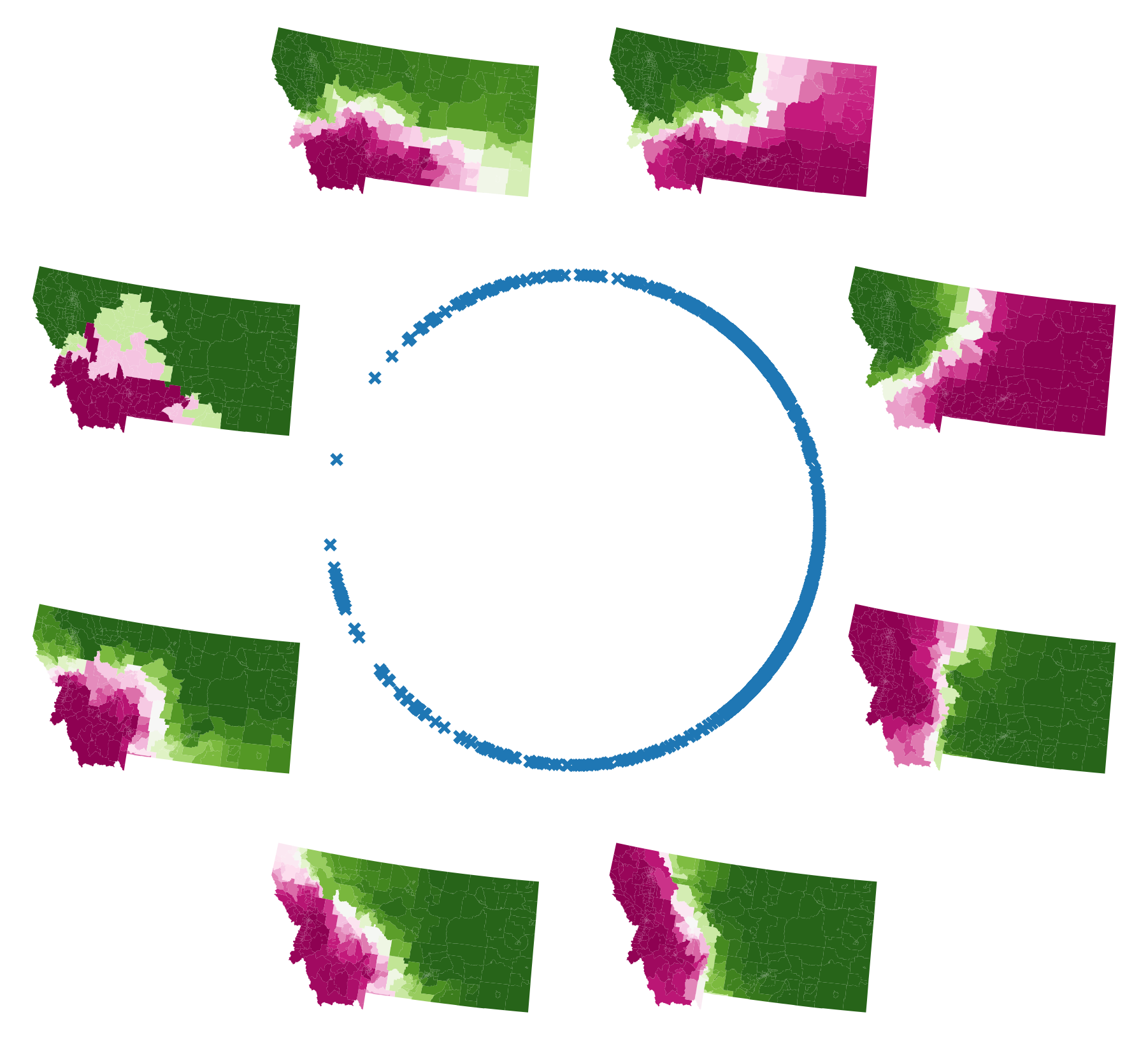}%
    \includegraphics[width=0.6\textwidth]{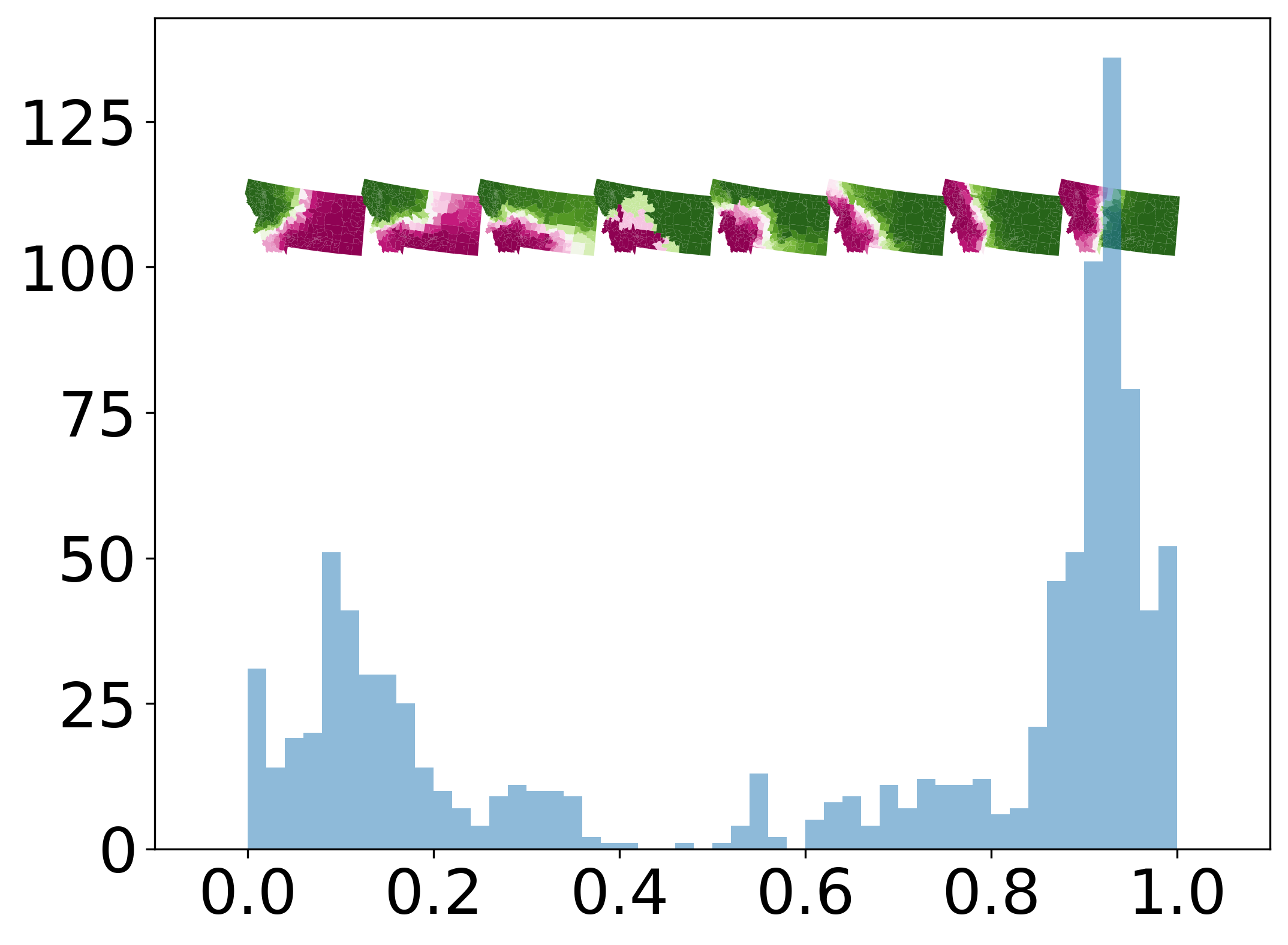}%
    \caption{Montana (MT)}
    \end{subfigure} %
    \begin{subfigure}{0.37\textwidth}
    \includegraphics[width=0.38\textwidth]{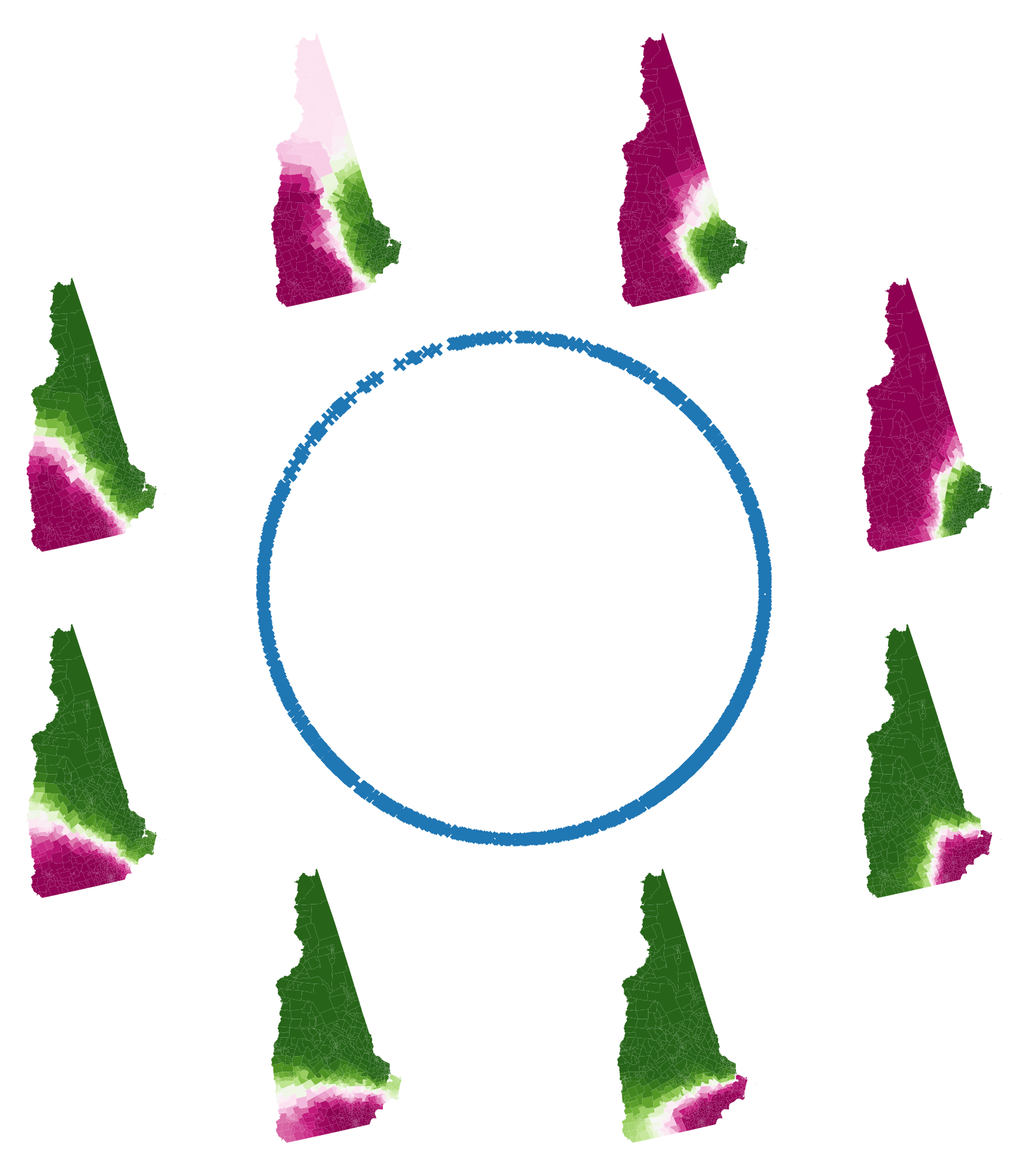}%
    \includegraphics[width=0.6\textwidth]{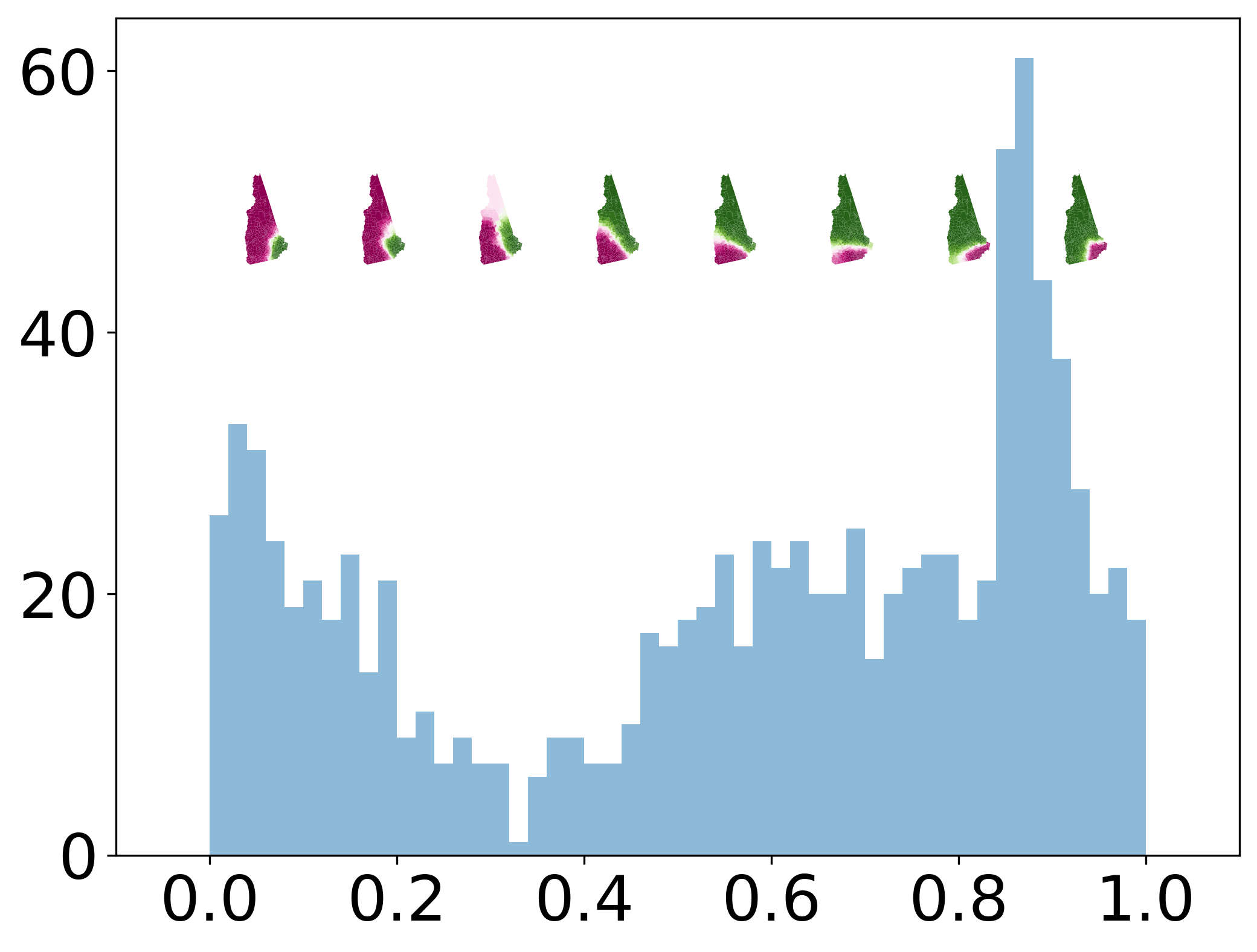}%
    \caption{New Hampshire (NH)}
    \end{subfigure}     
    
    \begin{subfigure}{0.37\textwidth}
    \includegraphics[width=0.38\textwidth]{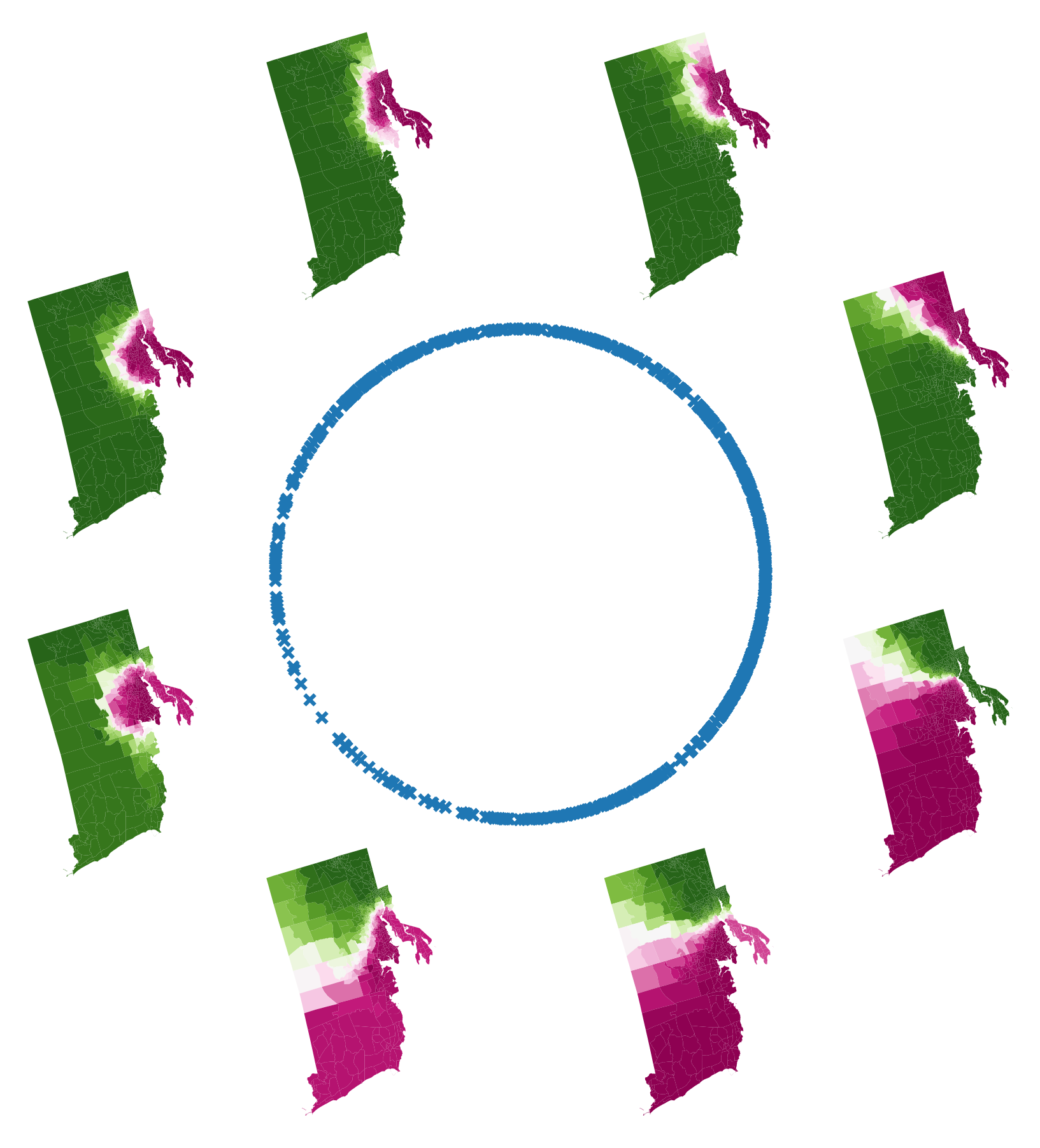}%
    \includegraphics[width=0.6\textwidth]{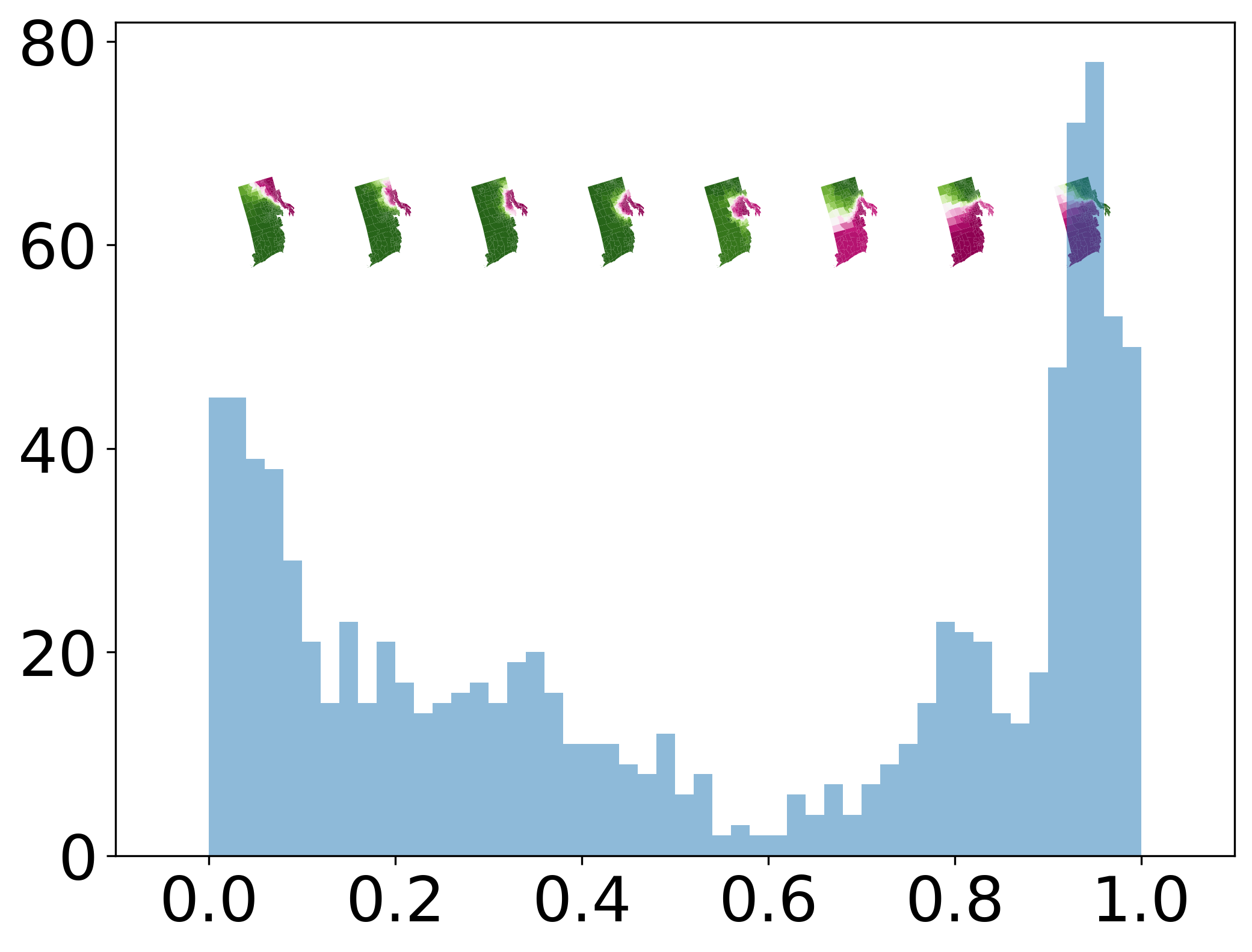}%
    \caption{Rhode Island (RI)}
    \end{subfigure} %
    \begin{subfigure}{0.37\textwidth}
    \includegraphics[width=0.38\textwidth]{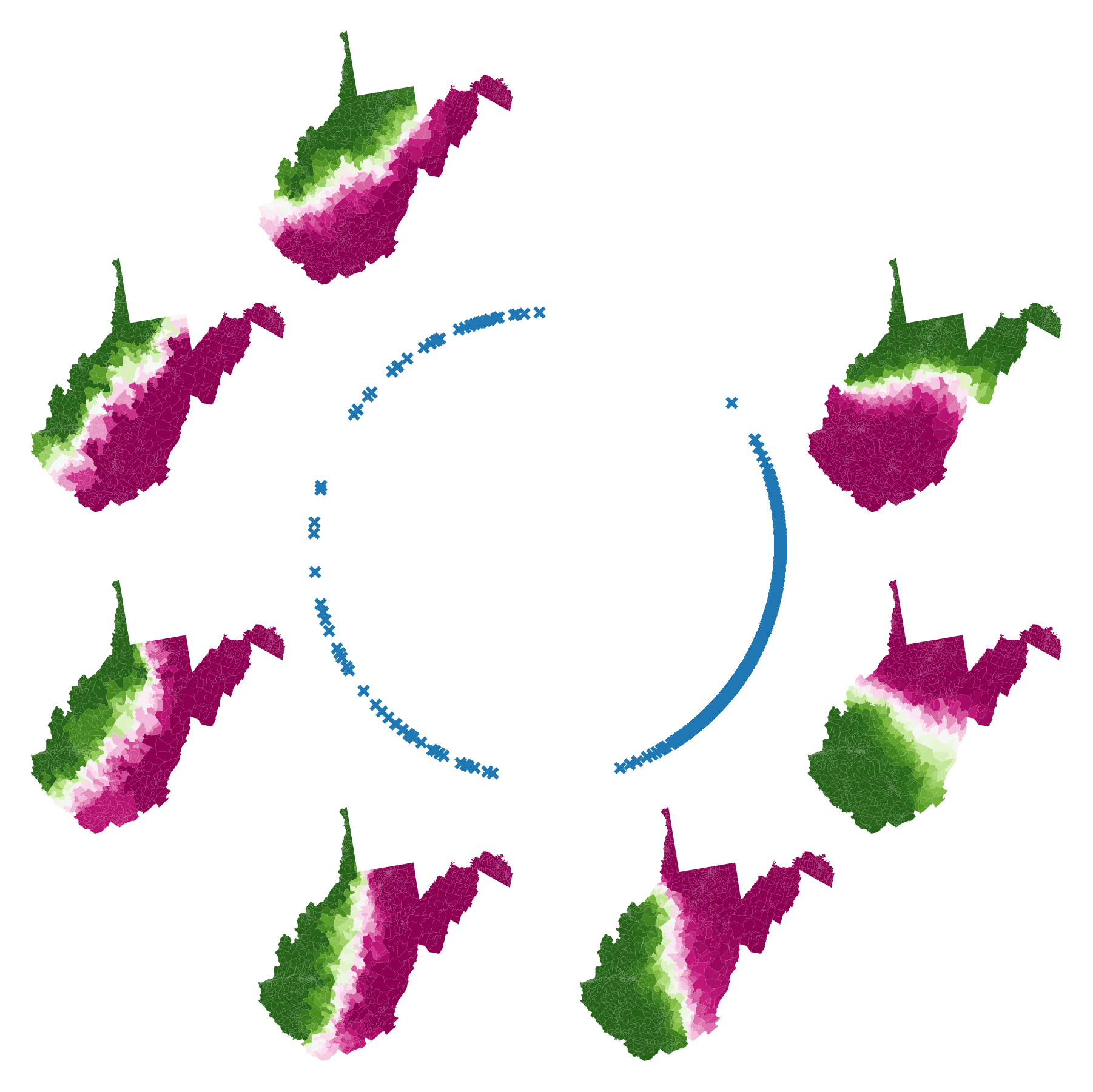}%
    \includegraphics[width=0.6\textwidth]{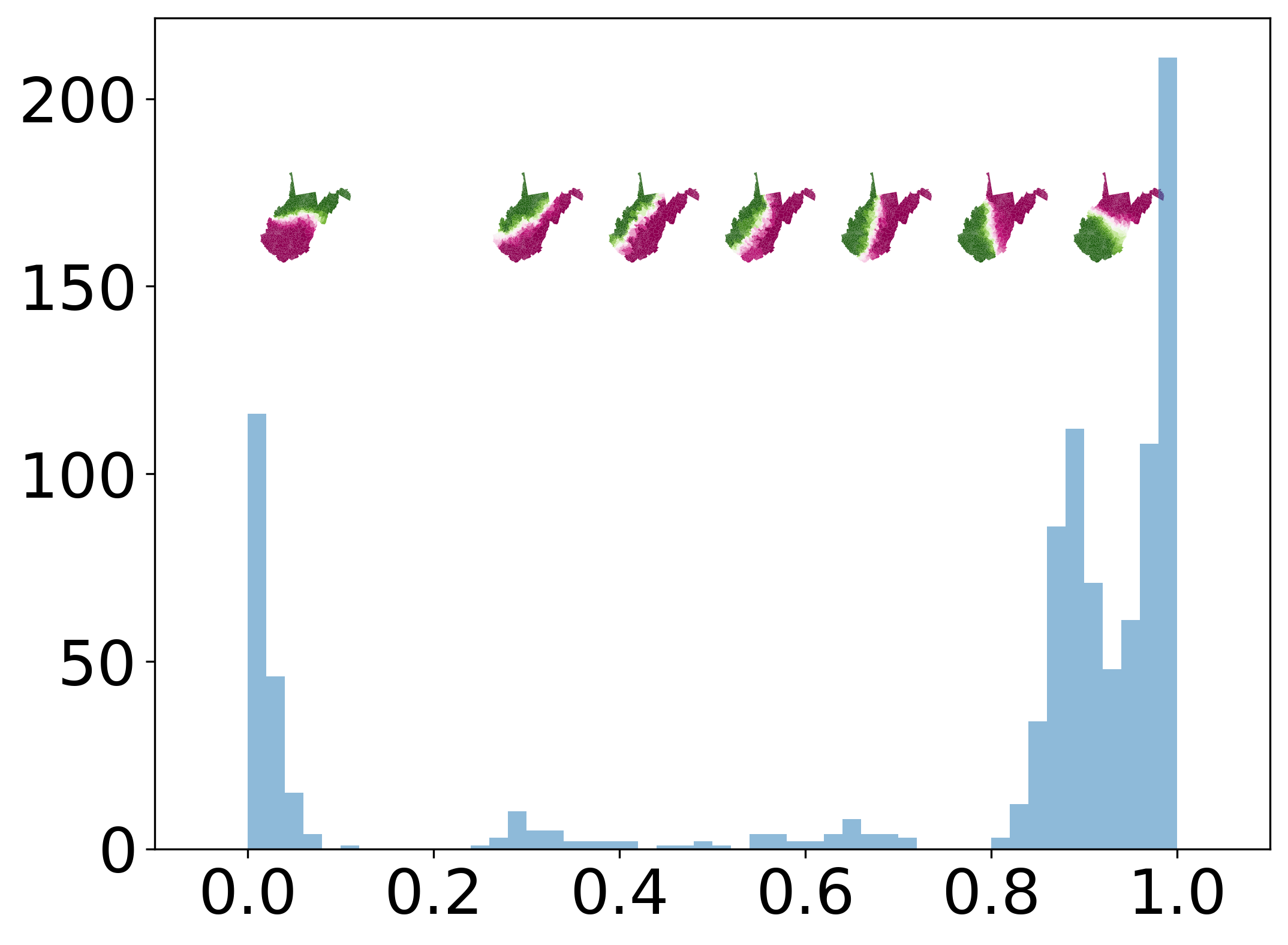}%
    \caption{West Virginia (WV)}
    \end{subfigure}

    \caption{Circle embeddings of 1,000-plan ensembles using SRGW+GD. Heat maps indicate the average district location for plans in each part of the circle.}\label{fig:circles}
\end{figure}

    \paragraph{Results.} In general, we see that circular coordinates roughly parameterize the angle of the boundary: from a north-south division, round to an east-west division and then back to a north-south division. This is most easily visible for {West Virginia} and {Montana}. This is strong evidence that the circle is a good choice of target space for embedding these ensembles. The boundary does not always look linear for states with a very uneven population distribution such as {Maine} (where most of the population is in the south of the state), though we still see a smooth rotation. 

    A general trend we can observe is a preference for divisions of the state with short (internal) boundary length. Boundary length is one possible measure of ``compactness'', a redistricting criterion often written into legislation around redistricting. The ReCom algorithm is known to favor low boundary lengths, where the boundary is measured by the number of Census blockgroups (or other geographic units) on the boundary of the districts~\cite{deford2021recombination, procaccia2022compact}. Preference for short boundaries can be observed in  {West Virginia}, where the northwest-southeast division requires a long boundary and is thus not likely to be drawn by the ReCom algorithm. In {Idaho}, we see a distribution with at least two modes: a northwest-southeast division and a northeast-southwest division. We should note that in {Idaho}, a straight, vertical boundary is so unlikely that it doesn't even show up on the heat maps; this is likely because most of the large boundary length this would require. In {Maine}, most plans are concentrated around northeast-southwest split. In New Hampshire most plans are concentrated around a roughly north-south split, though there is more variance than for Maine. These results for Maine and New Hampshire align with the analysis in \cite{asgari2020assessing} of redistricting in these states. In particular, the authors of \cite{asgari2020assessing} conclude that the enacted west-east redistricting plan in New Hampshire was an outlier compared to their ensemble, and propose that incumbent protection may have played a role in the drawing of that map. Our analysis also suggests that a east-west division of the state would be an outlier when compared to the ensemble.

\section*{Acknowledgments}

This material is based upon work supported by the National Science Foundation under the following grants. T.~Needham was supported by NSF grants DMS--2107808 and DMS--2324962. R.~A.~Clark was supported by the NSF MPS Ascending Postdoc Award 2138110. We would like to thank the anonymous reviewers for their helpful suggestions during the review process.

\bibliographystyle{plain}
\bibliography{bib}

\appendix

\section{Proofs}\label{app:proofs}

Before proceeding with the proofs, we briefly present a diagram summarizing the relationships between various dissimilarity measures considered in this paper.

\begin{figure}[h]
    \centering
    \begin{tikzpicture}
        \node at (0,0) {$\dgh$};
        \draw[->] (0,-0.5)--(0,-1.5);
        \node[anchor=west] at (0.1, -1)  {semi-relax};
        \node at (0,-2) {$\dsgh$};
        \draw[->] (0,-2.5)--(0,-3.5);
        \node[anchor=west] at (0.1, -3)  {symmetrize};
        \node at (0,-4) {$\symdsgh$};

        \node at (4,0) {};
        \node at (4,-2) {};
        \node at (4,-4) {$\modgh$};

        \node at (8,0) {$\dgwp$};
        \draw[->] (8,-0.5)--(8,-1.5);
        \node[anchor=west] at (8.1, -1)  {semi-relax};
        \draw[->] (8,-2.5)--(8,-3.5);
        \node[anchor=west] at (8.1, -3)  {symmetrize};
        \node at (8,-2) {$\dsgwp$};
        \node at (8,-4) {$\symdsgwp$};

        \draw[implies-implies,double equal sign distance] (8.6,-2)--(11.4, -2);
        \node at (12, -2) {MDS};
        \node at (10, -1.8) {Cor. \ref{cor:connection_to_MDS} $\ p=2$};

        \draw[double equal sign distance] (4.6,-4)--(7.4,-4);
        \node at (6, -3.8) {Th$^\text{m}$ \ref{thm:srgh_equals_srgw}$\ p=\infty$};
        
        \draw[double equal sign distance] (0.6,-4)--(3.4,-4);
        \node at (2, -3.8) {Th$^\text{m}$ \ref{thm:equivalent_mGH}};
    \end{tikzpicture}
    \caption{Summary of the relationships between various notions of dissimilarity defined in Sections~\ref{sec:gwdistances} and \ref{sec:gromov-type_distances}.}
    \label{fig:diagram}
\end{figure}

\subsection{Proof of Theorem \ref{thm:monge}}\label{sec:proof_of_monge}

The proof will use some preliminary definitions and a lemma. Recall that the \define{$p$-diameter}~\cite[Definition 5.2]{memoli2011gromov} of a mm-space $(X,d_X,\mu_X)$ is given by
\[
\mathrm{diam}_p(X)^p  = \int_X \int_X d_X(x,x')^p d\mu_X(x)d\mu_X(x') = \|d_X\|_{L^p(\mu_X \otimes \mu_X)}^p
\]
for $p \in [1,\infty)$, and, for $p=\infty$, the $p$-diameter $\|d_X\|_{L^\infty(\mu_X \otimes \mu_X)}$ is equal to the metric diameter of the support of $\mu_X$, which we denote simply as $\mathrm{diam}(X)$. For any metric space $(Y,d_Y)$ and any point $y_0 \in Y$, it is not hard to see that 
\begin{equation}\label{eqn:diameter_function}
\mathrm{diam}_p(X) = \dis_p(f_{y_0}),
\end{equation}
where $f_{y_0}:X \to Y$ is the function sending all of $X$ to $y_0$. For an arbitrary measurable map $f:X \to Y$, the \define{image} of $f$ is the mm-space $(\mathrm{image}(f),d_Y|_{\mathrm{image}(f)^2},f_\# \mu_X)$.

\begin{Lemma}\label{lem:monge}
    Let $(X,d_X,\mu_X)$ be a metric space with $X$ finite and $\mu_X$ fully supported and let $(Y,d_Y)$ be a proper metric space. Let $\gamma \in \semcoup(\mu_X,Y)$ be a semi-coupling from $X$ to $Y$ with finite $p$-distortion $\dis_p(\gamma)$. Then for any $p \in [1,\infty]$ there exists a function $f: X \to Y$ such that
    \[
    \dis_p(\mu_f) \leq \dis_p(\gamma),
    \]
    where $\mu_f$ is the semi-coupling induced by $f$. Moreover, the inequality is strict if $p < \infty$ and $\gamma$ is not itself induced by a function, and the image of $f$ can be chosen to have diameter satisfying 
    \begin{equation}\label{eqn:diameter_bound}
    \mathrm{diam}(\mathrm{image}(f)) \leq 2 \cdot \frac{\mathrm{diam}_p(X)}{\min_{x \in X} \{\mu_X(x)^{2/p}\}},
    \end{equation}
    for $p < \infty$. When $p = \infty$, the diameter bound is
    \[
    \mathrm{diam}(\mathrm{image}(f)) \leq 2 \cdot \mathrm{diam}(X).
    \]
\end{Lemma}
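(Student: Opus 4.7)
The plan is to disintegrate $\gamma$ with respect to its first marginal and then iteratively replace each conditional by a well-chosen Dirac mass. Because $X$ is finite and $\mu_X$ is fully supported, write $\gamma = \sum_{x \in X} \mu_X(x)\, \delta_x \otimes \gamma_x$ for probability measures $\gamma_x$ on $Y$. For $p < \infty$, expanding the $p$-distortion along this decomposition, the part of $\dis_p(\gamma)^p$ that depends on a single $\gamma_x$ (with all other $\gamma_{x'}$ held fixed) splits into an off-diagonal contribution $2 \mu_X(x) \int \Phi_x(y)\, d\gamma_x(y)$, where
\[
\Phi_x(y) := \sum_{x' \neq x} \mu_X(x') \int |d_X(x, x') - d_Y(y, y')|^p\, d\gamma_{x'}(y'),
\]
plus a nonnegative diagonal term $\mu_X(x)^2 \int\int d_Y(y, y')^p\, d\gamma_x(y)\, d\gamma_x(y')$.

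The replacement step chooses $f(x) \in \mathrm{argmin}_{y \in Y} \Phi_x(y)$ and substitutes $\delta_{f(x)}$ for $\gamma_x$. Since $\Phi_x(f(x)) \leq \int \Phi_x\, d\gamma_x$, the off-diagonal part weakly decreases, while the diagonal part drops to $0$; the latter is a strict decrease unless $\gamma_x$ is already a Dirac mass. Iterating over the finitely many points of $X$ (updating the $\Phi_{x'}$ after each step to reflect the latest conditionals) produces the desired $f$ with $\dis_p(\mu_f) \leq \dis_p(\gamma)$, strict unless every $\gamma_x$ was already a Dirac, i.e.\ unless $\gamma$ was itself induced by a function. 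Existence of a minimizer of $\Phi_x$ at each step is the main analytic point: $\Phi_x$ is continuous, and I would extract coercivity from the finite $p$-distortion of $\gamma$, which forces $(\pi_2)_\# \gamma$ to have finite $p$-diameter and hence each $\Phi_x$ to blow up at infinity. Combined with properness of $Y$, this makes the sublevel sets of $\Phi_x$ compact and yields a minimizer.

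The $p = \infty$ case is handled separately and more directly: since $\mathrm{supp}(\gamma)$ projects onto $X$, one selects any $f(x) \in \{y : (x, y) \in \mathrm{supp}(\gamma)\}$, and $|d_X(x, x') - d_Y(f(x), f(x'))| \leq 2\, \dis_\infty(\gamma)$ is immediate from the definition of $\dis_\infty$. No strict-inequality statement holds here, as the supremum is insensitive to mass redistribution within the support.

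Finally, for the diameter bounds, isolating a single off-diagonal summand in $\dis_p(\mu_f)^p$ yields
\[
\mu_X(x)\mu_X(x')\, |d_X(x, x') - d_Y(f(x), f(x'))|^p \leq 2^{p-1}\, \dis_p(\mu_f)^p.
\]
Without loss of generality $\dis_p(\mu_f) \leq \mathrm{diam}_p(X)$, since otherwise we may replace $f$ by a constant map, whose induced semi-coupling has distortion exactly $\mathrm{diam}_p(X)$ by \eqref{eqn:diameter_function}. Combining this estimate with the crude bound $\mathrm{diam}(X) \leq \mathrm{diam}_p(X)/\min_x \mu_X(x)^{2/p}$ and the triangle inequality $d_Y(f(x), f(x')) \leq d_X(x, x') + |d_X(x, x') - d_Y(f(x), f(x'))|$ yields the stated bound; the $p = \infty$ case follows by the same argument with $\dis_p$ replaced by $\dis_\infty$. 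The main obstacle I anticipate is rigorously establishing coercivity of $\Phi_x$ (hence existence of its minimizer) when $Y$ is unbounded; the rest of the argument is bookkeeping with the integrals above.
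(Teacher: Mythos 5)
Your core argument is exactly the paper's: disintegrate $\gamma$ as $\sum_x \mu_X(x)\,\delta_x\otimes\gamma_x$, introduce the one-variable cost (your $\Phi_x$, the paper's $g$), replace each $\gamma_x$ by a Dirac at an argmin, observe that the off-diagonal contribution weakly decreases and the diagonal contribution drops to zero (strictly, unless $\gamma_x$ was already a Dirac), and iterate over the finitely many $x$. You also correctly identify coercivity of $\Phi_x$ as the real analytic content; the paper fills this in with the inequalities $|a+b|^p \le 2^{p-1}(|a|^p+|b|^p)$ and the triangle inequality to get a lower bound for $\Phi_x$ that grows like $d_Y(y,y_0)^p$, but you leave it as a sketch. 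One genuine difference in your favor: you handle $p=\infty$ directly by selecting $f(x)$ from the slice of $\mathrm{supp}(\gamma)$ over $x$, which is cleaner and more self-contained than the paper's route through Theorems~\ref{thm:equivalent_mGH} and~\ref{thm:srgh_equals_srgw}.

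For the diameter bound, however, your route does not recover the stated constant. First, the single-summand estimate should read $\mu_X(x)\mu_X(x')\,|d_X(x,x')-d_Y(f(x),f(x'))|^p \le 2^p\,\dis_p(\mu_f)^p$ (the factor comes from the $\tfrac{1}{2}$ in $\dis_p$, so $2^p$, not $2^{p-1}$). More substantively, chaining your bound on $|d_X-d_Y\circ(f\times f)|$ with the triangle inequality $d_Y(f(x),f(x'))\le d_X(x,x')+|d_X(x,x')-d_Y(f(x),f(x'))|$ and your crude bound $\mathrm{diam}(X)\le \mathrm{diam}_p(X)/\min_k\mu_X(x_k)^{2/p}$ yields $\mathrm{diam}(\mathrm{image}(f))\lesssim 3\cdot\mathrm{diam}_p(X)/\min_k\mu_X(x_k)^{2/p}$, not the claimed factor of $2$. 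The paper instead applies the reverse triangle inequality at the level of $L^p$ norms, $\dis_p(\mu_f)\ge |\mathrm{diam}_p(X)-\mathrm{diam}_p(\mathrm{image}(f))|$, to conclude $\mathrm{diam}_p(\mathrm{image}(f))\le 2\,\mathrm{diam}_p(X)$ (else a constant map beats $f$), and only then passes from $\mathrm{diam}_p$ to $\mathrm{diam}$ via the $\min\mu_X(x)^{2/p}$ estimate; that order of operations produces the constant $2$ cleanly. For the downstream use in Theorem~\ref{thm:monge} any fixed constant suffices, so this does not break the theorem, but as written your argument does not prove the inequality \eqref{eqn:diameter_bound} literally.
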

    
\begin{proof}
The case for $p = \infty$ follows from Theorems \ref{thm:equivalent_mGH} and \ref{thm:srgh_equals_srgw}---details are explained in Remark~\ref{rem:completing_monge_proof}. For the rest of the proof, we assume $p< \infty$. 

    For $x \in X$, let $\gamma_x$ be the measure on $Y$ coupled to $x$ -- that is, the pushforward along the second projection $\pi_2: X\times Y \to Y$ of the restriction of $\gamma$ to the subspace $\{x\} \times Y$. Said differently, $\gamma_x$ is the \emph{disintegration kernel} of $\gamma$ at $x$, and this measure satisfies 
    \begin{equation}\label{eqn:disintegration}
    \gamma = \sum_{x \in X} \mu_X(x) \cdot \delta_x \otimes \gamma_x. 
    \end{equation}
    The idea of the proof is to adjust $\gamma$ by replacing the disintegration kernels with Dirac masses in a manner which decreases $p$-distortion.
    
 Fix $x_0 \in X$ and define the function $g: Y \to \mathbb{R}$ by
\begin{align*}
   g(y)  &=  \int_{(X\setminus \{x_0\}) \times Y} |d_X(x_0,x') - d_Y(y,y')|^p d\gamma(x',y') \\
   &= \sum_{x' \in X \setminus \{x_0\}} \mu_X(x') \int |d_X(x_0,x') - d_Y(y, y')|^p \gamma_{x'}(y').
\end{align*}
Intuitively, $g(y)$ is the total distortion of distances from $x_0$, assuming that we couple $x_0$ to the point $y$. We claim that $g$ achieves a minimum in $Y$. If $Y$ is bounded, then it must be compact (as it is assumed to be proper), so the claim follows in this case by continuity of $g$. To handle the case where $Y$ is unbounded, first fix a basepoint $y_0 \in Y$. Since $\gamma$ has finite $p$-distortion, the moment
\[
m_x = \int d_Y(y_0, y)^p \gamma_x(y) 
\]
is finite for every $x \in X$. Next, consider the quantity
\[
A = \gamma((X \setminus \{x_0\}) \times Y).
\]
If $X = \{x_0\}$, then our lemma follows trivially, since any function would induce a coupling with zero distortion. Assuming that $X$ contains more than a single point, the assumption that $\mu_X$ is fully supported implies that $A$ is strictly positive. Using the inequality $\frac{|A+B|^p}{2^{p-1}} \leq |A|^p + |B|^p$ and the triangle inequality, we have
\begin{align*}
    g(y)  & = \sum_{x' \in X \setminus \{x_0\}} \mu_X(x') \int_Y |d_X(x_0,x') - d_Y(y, y')|^p d\gamma_{x'}(y')  \\
    & \geq \sum_{x' \in X \setminus \{x_0\}} \mu_X(x') \int_Y \frac{1}{2^{p-1}}d_Y(y, y')^p - d_X(x_0,x')^p d\gamma_{x'}(y') \\
    & \geq \sum_{x' \in X \setminus \{x_0\}} \mu_X(x') \int_Y \frac{1}{2^{p-1}}\left(\frac{1}{2^{p-1}} d_Y(y, y_0)^p - d_Y(y_0, y')^p \right) - d_X(x_0,x')^p d\gamma_{x'}(y') \\
    &= \frac{1}{2^{2p - 2}} d_Y(y,y_0)^p \sum_{x' \in X \setminus \{x_0\}} \mu_X(x') \int_Y d\gamma_{x'}(y) \\
    &\hspace{2in} - \frac{1}{2^{p-1}} \sum_{x' \in X \setminus \{x_0\}} \mu_X(x') \int_Y d(y_0,y')^p d\gamma_{x'}(y') \\
    &\hspace{2.5in} - \sum_{x' \in X \setminus \{x_0\}} \mu_X(x') \int_Y d_X(x_0,x')^p d\gamma_{x'}(y) \\
    &\geq \frac{1}{2^{2p-2}} d_Y(y,y_0)^p \cdot A - \sum_{x' \in X \setminus \{x_0\}} \frac{m_{x'}}{2^{p-1}} \mu_X(x') - \mathrm{diam}(X)^p \cdot A.
\end{align*}
Since this lower bound is strictly increasing in $d_Y(y,y_0)$ and we have assumed that $Y$ is unbounded, we can find a sufficiently large $M > 0$ such that if $d_Y(y,y_0) \geq M$ then $g(y) > g(y_0)$. As $Y$ is proper, the closed ball $B(y_0, M)$ is compact, so continuity of $g$ implies that $g$ achieves a minimum on this ball at some $\hat{y} \in B(y_0, M)$. This minimum is therefore a global minimum for $g$. 

Now consider the measure
\[
\tilde{\gamma} = \mu_X(x_0) \delta_{(x_0, \hat{y})} + \sum_{x \in X \setminus \{x_0\}} \mu_X(x) \cdot \delta_x \otimes \gamma_{x}.
\]
That is, $\tilde{\gamma}$ is obtained from $\gamma$ by replacing the disintegration kernel $\gamma_{x_0}$ with the weighted Dirac mass $\mu_X(x_0)\delta_{\hat{y}}$ (see \eqref{eqn:disintegration}). It is then easy to see that $\tilde{\gamma} \in \semcoup(\mu_X,Y)$. We will show that
\[
\dis_p(\tilde{\gamma}) \leq \dis_p(\gamma).
\]
To do so, we define the following quantity measuring the distortion induced by an arbitrary semicoupling $\alpha \in \semcoup(\mu_X,Y)$ of the distance between a given pair of points $x,x' \in X$:
\[
C_{x,x'}(\alpha) = \int_Y \int_Y |d_X(x,x') - d_Y(y, y')|^p d\alpha_x(y) d\alpha_{x'}(y')
\]
(with $\alpha_x$ denoting the disintegration kernel of $\alpha$ at $x$). Then the $p$-distortion of $\gamma$ satisfies
\begin{multline}\label{eq:fulldis}
        \dis_p(\gamma)^p =  C_{x_0,x_0}(\gamma) \mu_X(x_0)^2 + 2 \mu_X(x_0) \sum_{x' \in X \setminus \{x_0\}} C_{x_0,x'}(\gamma) \mu_X(x') + \sum_{x, x' \in X \setminus \{x_0\}} C_{x,x'}(\gamma) \mu_X(x) \mu_X(x').
\end{multline}
Consider the corresponding terms in $\dis_p(\tilde{\gamma})^p$. It is straightforward to see that
\[
C_{x_0,x_0}(\tilde{\gamma}) = 0 \quad\mbox{and}\quad  \sum_{x, x' \in X \setminus \{x_0\}} C_{x,x'}(\tilde{\gamma}) \mu_X(x) \mu_X(x') =  \sum_{x, x' \in X \setminus \{x_0\}} C_{x,x'}(\gamma) \mu_X(x) \mu_X(x').
\]
Moreover, we have
\begin{align*}
    &\sum_{x' \in X \setminus \{x_0\}} C_{x_0, x'}(\gamma) \mu_X(x') \\
    &\qquad = \sum_{x' \in X \setminus \{x_0\}} \mu_X(x') \int_Y \int_Y |d_X(x_0,x') - d_Y(y, y')|^p d\gamma_{x_0}(y) d\gamma_{x'}(y') \\
    &\qquad = \int_Y \left( \sum_{x' \in X \setminus \{x_0\}} \mu_X(x')  \int_Y |d_X(x_0,x') - d_Y(y, y')|^p  d\gamma_{x'}(y') \right) d\gamma_{x_0}(y) \\
    &\qquad = \int_Y g(y) d\gamma_{x_0}(y) \\
    &\qquad\geq  g(\hat{y}) \mu_X(x_0) \\
    &\qquad= \sum_{x' \in X \setminus \{x_0\}} \mu_X(x_0) \int_Y |d_X(x_0,x') - d_Y(\hat{y},y')|^p \mu_X(x_0) d\tilde{\gamma}_{x'}(y') \\
    &\qquad= \sum_{x' \in X \setminus \{x_0\}} \mu_X(x_0) \int_Y \int_Y |d_X(x_0,x') - d_Y(y,y')|^p d\tilde{\gamma}_{x_0}(y) d\tilde{\gamma}_{x'}(y') \\
    &\qquad= \sum_{x' \in X \setminus \{x_0\}} C_{x_0,x'}(\tilde{\gamma}) \mu_X(x').
\end{align*}
Putting this together, we have
\[
\dis_p(\tilde{\gamma}) \leq \dis_p(\gamma),
\]
with a strict inequality if $x_0$ was not already coupled to a unique point. Performing the same replacement for each point in $X$ yields a function $f$ such that $\dis_p(\mu_f) \leq \dis_p(\gamma)$, with strict inequality if $\gamma$ is not already induced by a function.

To prove the diameter bound \eqref{eqn:diameter_bound}, first suppose that the image of a function $f$, $\mathrm{image}(f)$, has $p$-diameter strictly larger than $2 \cdot \mathrm{diam}_p(X)$. Consider the function $f_{y_0}:X \to Y$ with image a single point $y_0$. Then
\begin{align}
\dis_p(\mu_f) &= \|d_X - d_Y\|_{L^p(\mu_f \otimes \mu_f)} = \|d_X - d_Y \circ (f \times f)\|_{L^p(\mu_X \otimes \mu_X)} \nonumber \\
&\geq \left|\|d_X\|_{L^p(\mu_X \otimes \mu_X)} - \|d_Y \circ (f \times f)\|_{L^p(\mu_X \otimes \mu_X)}\right| = |\mathrm{diam}_p(X) - \mathrm{diam}_p(\mathrm{image}(f))| \nonumber \\
&> \mathrm{diam}_p(X) = \mathrm{dis}_p(\mu_{f_{y_0}}), \label{eqn:diameter_bound_1}
\end{align}
where the last line follows by \eqref{eqn:diameter_function}. One can therefore replace $f$ by $f_{y_0}$ while achieving smaller $p$-distortion, and it follows that we can assume that $\mathrm{diam}_p(\mathrm{image}(f)) \leq 2 \cdot \mathrm{diam}_p(X)$. To complete the proof, we write $X = \{x_1,\ldots,x_N\}$ and observe that
\begin{align*}
    \mathrm{diam}_p(\mathrm{image}(f)) &= \left(\sum_{i,j=1^N} d_Y(f(x_i),f(x_j))^p \mu_X(x_i) \mu_X(x_j)\right)^{1/p} \\
    &\geq \max_{i,j} d_Y(f(x_i),f(x_j))\mu_X(x_i)^{1/p} \mu_X(x_j)^{1/p} \\
    &\geq \max_{i,j} d_Y(f(x_i),f(x_j)) \cdot \min_k \mu_X(x_k)^{2/p} \\
    &= \mathrm{diam}(\mathrm{image}(f)) \cdot \min_k \mu_X(x_k)^{2/p},
\end{align*}
and \eqref{eqn:diameter_bound} follows.
\end{proof}

\begin{proof}[Proof of Theorem \ref{thm:monge}]
Let $K$ be a compact subset of $Y$ such that the translates of $Y$ by $G$ have Lebesgue number $R$ with 
\[
R > \left\{\begin{array}{rl}
2\cdot \mathrm{diam}_p(X) /\min_{x \in X} \{\mu_X(x)^{2/p}\} & \mbox{if } p < \infty \\
2 \cdot \mathrm{diam}(X) & \mbox{if } p = \infty.
\end{array}\right.
\]
Consider the problem of computing $\dsgwp(X, K)$. Since $K$ is compact, there is an optimal semi-coupling of $X$ to $K$ induced by a map $f: X \to Y$. Indeed, by Lemma \ref{lem:monge}, it is sufficient to optimize over the space $K^X$ of functions from $X$ to $K$, which is compact, and $f \mapsto \dis_p(\mu_f)$ is continuous. We claim that the map $f: X \to Y$ given by composing with the inclusion $K \subseteq Y$ suffices to prove the theorem. Suppose there exist some semi-coupling $\gamma \in \semcoup(\mu_X,Y)$ with lower $p$-distortion than $\mu_f$. By Lemma \ref{lem:monge}, we may assume that $\gamma$ is itself induced by a map $g: X \to Y$ whose image has diameter less than $R$. By construction, the image of $g$ is contained in a translate $hK$ of $K$ by some $h \in G$. The map $h^{-1}g: X \to Y$ has the same distortion as $g$ since the action of $h$ is an isometry, and its image is contained in $K$. But since $f$ was assumed to give an optimal coupling from $X$ to $K$, this is a contradiction. 
\end{proof}

\subsection{Proof of Theorem \ref{thm:equivalent_mGH}}\label{sec:proof_equivalent_mGH}

\begin{proof}
    Let $X$ and $Y$ be metric spaces. For any map $f:X \to Y$, define $R_f = \{(x,f(x)) \mid x \in X\}$. Then $R_f$ is a semi-correspondence, and 
    \begin{align*}
    \dis(R_f) &= \frac{1}{2} \sup_{(x,y),(x',y') \in R_f} |d_X(x,x')-d_Y(y,y')| \\
    &= \frac{1}{2} \sup_{x,x' \in X} |d_X(x,x')-d_Y(f(x),f(x'))| = \dis(f).
    \end{align*}
    Therefore,
    $
    \dsgh(X,Y) \leq \inf_{f:X \to Y} \dis(f).
    $
    Define $R_g$ similarly for a map $g:Y \to X$, so that 
    $
    \dsgh(Y,X) \leq  \inf_{g: Y \to X} \dis(g).
    $
    It follows that 
    \begin{align*}
    \symdsgh(X,Y) &= \max \{\dsgh(X,Y), \dsgh(Y,X)\} \\
    &\leq \max \left\{\inf_{f:X \to Y} \dis(f), \inf_{g:Y \to X} \dis(g)\right\} = \modgh(X,Y).
    \end{align*}

    To prove the other inequality, let $R \in \semcorr(X,Y)$. Define a map $f_R:X \to Y$ by choosing, for each $x \in X$, some $y \in Y$ such that $(x,y) \in R$, and setting $f_R(x) = y$. Then
    \begin{align*}
    \mathrm{dis}(f_R) &= \sup_{x,x' \in X} |d_X(x,x') - d_Y(f_R(x),f_R(x'))| \\
    &\leq \sup_{(x,y),(x',y') \in R} |d_X(x,x') - d_Y(y,y')| = \dis(R).
    \end{align*}
    By similar reasoning to above, it follows that $\modgh(X,Y) \leq \symdsgh(X,Y)$.
\end{proof}

\subsection{Proof of Theorem \ref{thm:srgh_equals_srgw}}\label{sec:proof_srgh_equals_srgw}

The proof will use a lemma.

\begin{Lemma}\label{lem:correspondence_approximation}
     Let $X$ and $Y$ be mm-spaces such that $\mu_X$ has full support and let  $R \in \mathcal{SR}(X,Y)$. For any $\epsilon > 0$, there exists $R_\epsilon \in \mathcal{SR}(X,Y)$ such that $R_\epsilon$ is a Borel measurable set and $|\mathrm{dis}(R) - \mathrm{dis}(R_\epsilon)| < \epsilon$.
\end{Lemma}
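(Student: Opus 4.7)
The plan is to build $R_\epsilon$ as the union of two pieces. The first is a countable subset of $R$ that witnesses the supremum defining $\dis(R)$, ensuring the distortion lower bound. The second is the graph of a Borel piecewise-constant function $f : X \to Y$ whose values are chosen to lie in $R$ on representatives of a fine Borel partition of $X$; this guarantees the semi-correspondence condition $\pi_1(R_\epsilon) = X$ while keeping the upper bound on distortion under control.

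Concretely, I would first dispense with the case $\dis(R) = \infty$ by observing that a Borel semi-correspondence containing a distortion-divergent sequence from $R$ (chosen by countable choice) has infinite distortion too. Assume now $\dis(R) < \infty$, fix $\delta \in (0,\epsilon)$, and pick pairs $(u_k,v_k),(u'_k,v'_k) \in R$ with $|d_X(u_k,u'_k) - d_Y(v_k,v'_k)| \to 2\dis(R)$. By separability of $X$, choose a countable dense subset $\{x_n\}_{n \geq 1} \subseteq X$, and for each $n$ select $y_n \in Y$ with $(x_n,y_n) \in R$ (possible since $\pi_1(R) = X$; only countable choice is needed). Define the Borel cells
\[
B_n := \{x \in X : d_X(x,x_n) < \delta \text{ and } d_X(x,x_m) \geq \delta \text{ for all } m < n\},
\]
which are pairwise disjoint and cover $X$ by density. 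Let $f : X \to Y$ be the Borel map with $f(x) = y_n$ on $B_n$, and set
\[
R_\epsilon := \{(x, f(x)) : x \in X\} \cup \{(u_k,v_k),\,(u'_k,v'_k) : k \in \mathbb{N}\}.
\]
The graph of $f$ equals the Borel set $\bigsqcup_n B_n \times \{y_n\}$, so $R_\epsilon$ is a Borel semi-correspondence.

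The lower bound $\dis(R_\epsilon) \geq \dis(R)$ is immediate, since all approximating pairs lie in $R_\epsilon$. For the upper bound, the key observation is that every graph point $(x, f(x))$ is within distance $\delta$ of some $(x_n, y_n) \in R$ in the first coordinate, so the distortion of any pair in $R_\epsilon \times R_\epsilon$ can be bounded by routing through $R$ via two applications of the triangle inequality. For two graph points $(x, y_n), (x', y_{n'}) \in R_\epsilon$ with $x \in B_n,\ x' \in B_{n'}$,
\[
|d_X(x,x') - d_Y(y_n,y_{n'})| \leq |d_X(x,x') - d_X(x_n,x_{n'})| + |d_X(x_n,x_{n'}) - d_Y(y_n,y_{n'})| \leq 2\delta + 2\dis(R);
\]
the mixed case (one graph point and one from the countable collection) and the pure-sequence case are analogous and give the same or smaller bound. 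Hence $\dis(R_\epsilon) \leq \dis(R) + \delta < \dis(R) + \epsilon$.

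The main obstacle is bookkeeping the cross-term in the mixed case and confirming Borel-measurability of $f$ and of its graph; both are handled uniformly by the triangle-inequality estimate above and the explicit form $\bigsqcup_n B_n \times \{y_n\}$ of the graph. Note that the full-support hypothesis on $\mu_X$ does not appear to be needed for this lemma in isolation, but it is presumably used downstream in the proof of Theorem \ref{thm:srgh_equals_srgw}.
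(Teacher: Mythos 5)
Your proof is correct, but it takes a genuinely different route from the paper. The paper's argument thickens $R$ into an open set: for each $p = (x,y) \in R$, it sets $U_p = B_X(x,\epsilon/4) \times B_Y(y,\epsilon/4)$ and $R_\epsilon = \bigcup_{p \in R} U_p$. Since $R \subseteq R_\epsilon$ and $R_\epsilon$ is open (hence Borel, and trivially still a semi-correspondence), the lower bound $\dis(R_\epsilon) \geq \dis(R)$ comes for free, and the upper bound follows from a two-step triangle-inequality estimate essentially identical to yours. Your construction replaces the thickening by the Borel graph of a piecewise-constant selection over a fine Borel partition of $X$, adjoined to a countable witnessing family for the supremum; you must carry that witnessing set explicitly precisely because your $R_\epsilon$ need not contain $R$. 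Both proofs hinge on the same triangle-inequality bookkeeping and both use only separability of $X$ (you are right that the full-support hypothesis is idle here and is used only downstream, when a semi-correspondence is turned into a semi-coupling). The paper's version is shorter and avoids any selection argument; yours is longer and invokes countable choice, but has the side benefit that $R_\epsilon$ is essentially a graph, with at most countably many exceptional points in each fiber, so the operation $V \mapsto \mathrm{proj}_X(V \cap R_\epsilon)$ used in the proof of Theorem~\ref{thm:srgh_equals_srgw} is more transparently Borel. Your handling of the $\dis(R)=\infty$ case is a small extra the paper leaves implicit, and it is correct.
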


\begin{proof}
    For any $p = (x,y) \in R$, let $U_p = B_X(x,\epsilon/4) \times B_Y(y,\epsilon/4)$ (with $B_\bullet(\cdot,\cdot)$ denoting an open metric ball in the appropriate space), and set $R_\epsilon = \bigcup_{p \in R} U_p$. Then $R_\epsilon$ is a Borel set, and we can bound the difference in distortions as follows. For any $(x,y),(x',y') \in R_\epsilon$, choose $p_0 = (x_0,y_0)$ and $p_0' = (x_0',y_0')$ in $R$ such that $(x,y) \in U_{p_0}$ and $(x',y') \in U_{p_0'}$. Then
    \begin{multline}\label{eqn:srgh_equals_srgw_triangle_ineq}
    |d_X(x,x') - d_Y(y,y')| \leq |d_X(x,x') - d_X(x_0,x_0')| \\ + |d_X(x_0,x_0') - d_Y(y_0,y_0')| + |d_Y(y_0,y_0') - d_Y(y,y')|.
    \end{multline}
    The first term on the right hand side of \eqref{eqn:srgh_equals_srgw_triangle_ineq} is bounded above by $\epsilon/2$. To see this, first observe that if $d_X(x,x') \geq d_X(x_0,x_0')$ then this term is bounded as
    \[
    d_X(x,x') - d_X(x_0,x_0') \leq d_X(x,x_0) + d_X(x_0,x_0') + d_X(x_0',x') - d_X(x_0,x_0') < 2 \cdot \frac{\epsilon}{4}.
    \]
    The case where $d_X(x,x') \leq d_X(x_0,x_0')$ follows similarly. Likewise, the last term on the right hand side of \eqref{eqn:srgh_equals_srgw_triangle_ineq} is bounded above by $\epsilon/2$. Putting this together with \eqref{eqn:srgh_equals_srgw_triangle_ineq}, we have 
    \[
    |d_X(x,x') - d_Y(y,y')| < |d_X(x_0,x_0') - d_Y(y_0,y_0')| + \epsilon \leq \mathrm{dis}(R) + \epsilon.
    \]
    Since $(x,y),(x',y') \in R_\epsilon$ were arbitrary, this implies 
    \[
    |\mathrm{dis}(R_\epsilon) - \mathrm{dis}(R)| = \mathrm{dis}(R_\epsilon) - \mathrm{dis}(R) < \epsilon,
    \]
    where the equality follows because $R \subset R_\epsilon$. This proves the claim.
\end{proof}

\begin{proof}[Proof of Theorem \ref{thm:srgh_equals_srgw}]
    We will show that the unsymmetrized versions satisfy $d_{\mathrm{srGW},\infty} = d_{\mathrm{srGH}}$, from which the claim follows. Observe that 
    \[
    d_{\mathrm{srGW},\infty}(X,Y) = \inf_{\gamma \in \mathcal{SC}(\mu_X,Y)} \mathrm{dis}(\mathrm{supp}(\gamma)),
    \]
    where $\mathrm{dis}$ is the same distortion that appears in the definition of (semi-relaxed) Gromov-Hausdorff distance. Since $\mu_X$ is assumed to be fully supported, we have 
    \[
    \{\mathrm{supp}(\gamma) \mid \gamma \in \mathcal{SC}(\mu_X,Y)\} \subset \mathcal{SR}(X,Y)
    \]
    and it follows that $d_{\mathrm{srGH}}(X,Y) \leq d_{\mathrm{srGW},\infty}(X,Y)$. 
    
    To prove the reverse inequality, let $R \in \mathcal{SR}(X,Y)$. For $\epsilon > 0$, let $R_\epsilon$ be as in Lemma \ref{lem:correspondence_approximation} and define a  measure $\gamma$ on $X \times Y$ by defining it on a Borel set $V \subset X \times Y$ as
    \[
    \gamma(V) = \mu_X\left(\mathrm{proj}_X\big(V \cap R_\epsilon \big) \right).
    \]
    This is well-defined, since $\mathrm{proj}_X\big(V \cap R_\epsilon\big)$ must be a Borel set in $X$. Moreover, for any measurable $A \subset X$, we have
    \[
    \gamma(A \times Y) = \mu_X\left(\mathrm{proj_X}\big((A \times Y) \cap R_\epsilon\big)\right) = \mu_X(A),
    \]
    so that $\gamma$ has the correct marginal condition. Finally, the support of $\gamma$ is $R_\epsilon$, so that $|\mathrm{dis}(R) - \mathrm{dis}(\mathrm{supp}(\gamma))| < \epsilon$. Since this holds for arbitrary $R$, we have $|d_{\mathrm{srGW},\infty}(X,Y) - d_\mathrm{srGH}(X,Y)| < \epsilon$ for any $\epsilon$, and the claim follows.

    It remains to prove the last statement of the theorem, on the metric properties of $\dsgwp$. That $\widehat{d}_{\mathrm{srGW},\infty}$ defines a metric with the given properties follows from the corresponding properties of $\modgh$, proved in Theorems 4.1 and 4.2 from ~\cite{memoli2012some}. We will prove that $\widehat{d}_{\mathrm{srGW},p}$ is not a pseudometric for $p < \infty$ by counterexample. Let $(X,d_X,\mu_X)$ be the one point mm-space, let $X = Y = \{a,b\}$ with $d_X(a,b) = d_Y(a,b) = 1$, but with different measures defined by $\mu_X(a) = \mu_X(b) = 1/2$, and $\mu_Y(a) = 1-\epsilon$ and $\mu_Y(b) = \epsilon$ for some $\epsilon > 0$, to be determined. Let $(Z,d_Z,\mu_Z)$ be the one-point metric measure space. Then it is not hard to show that
    \[
    \symdsgwp(X,Z) = \left(\frac{1}{2}\right)^{1/p}, \quad \symdsgwp(X,Y) = 0, \quad \symdsgwp(Y,Z) = (2\epsilon(1-\epsilon))^{1/p},
    \]
    so that 
    \[\symdsgwp(X,Z) > \symdsgwp(X,Y) + \symdsgwp(Y,Z)\]
    holds whenever $p < \infty$ and $\epsilon \neq 1/2$. Thus the triangle inequality fails when $p < \infty$. We note that similar counterexamples can be used to show that the triangle inequality even fails for the unsymmetrized version of srGW distance when $p < \infty$. 
\end{proof}

\begin{Remark}[Completing the Proof of Lemma~\ref{lem:monge}]\label{rem:completing_monge_proof}
    Recall that the proof of the $p=\infty$ case of Lemma~\ref{lem:monge} was claimed to follow from Theorems \ref{thm:equivalent_mGH} and \ref{thm:srgh_equals_srgw}. We now fill in those details and complete the proof of the lemma. 

    Let $p = \infty$ and suppose that $\gamma \in \semcoup(\mu_X,Y)$ with $\mathrm{dis}_\infty(\gamma) < \infty$. Observe that $\mathrm{dis}(\gamma) = \mathrm{dis}(R)$, where $R:=\mathrm{supp}(\gamma)$ and $\mathrm{dis}$ is the Gromov-Hausdorff distortion. Following the proof of Theorem \ref{thm:equivalent_mGH}, we can choose a function $f_R:X \to Y$ with $\mathrm{dis}(f_R) \leq \mathrm{dis}(R)$, and this implies $\mathrm{dis}_\infty(\mu_{f_R}) \leq \mathrm{dis}_\infty(\gamma)$. This completes the proof of the first part of the lemma.

    It remains to prove the diameter bound. Recall the argument given in \eqref{eqn:diameter_bound_1} to show that $\mathrm{diam}(\mathrm{image}(f)) < 2 \cdot \mathrm{diam}_p(X)$. This argument is written entirely in terms of $L^p$ norms, and still works in the $p = \infty$ case. This gives the desired diameter bound immediately.
\end{Remark}

\section{An Embedding Formulation}\label{sec:embedding_formulation}

Let $Z = (Z,d_Z)$ be a metric space and let $X,Y \subset Z$. The \define{Hausdorff distance} between $X$ and $Y$ is given by
\[
\dhaus^Z(X,Y) = \max\left\{ \sup_{x \in X} \inf_{y \in Y} d_Z(x,y), \sup_{y \in Y} \inf_{x \in X} d_Z(x,y) \right\}.
\]
Another equivalent formulation of Gromov-Hausdorff distance is given by
\begin{equation}\label{eqn:embedding_GH}
\dgh(X,Y) = \inf_{f,g,Z} \dhaus^Z(f(X),g(Y)),
\end{equation}
where the infimum is over metric spaces $Z$ and isometric embeddings $f:X \to Z$ and $g:Y \to Z$. We now give a similar reformulation of the semi-Relaxed Gromov-Hausdorff distance.

\begin{Definition}[Semi-Relaxed Hausdorff Distance]
    Let $Z$ be a metric space and let $X,Y \subset Z$. We define the \define{semi-relaxed Hausdorff (srH) distance} between $X$ and $Y$ to be
    \[
    \dsh^Z(X,Y) = \inf_{R \in \semcorr(X,Y)} \sup_{(x,y) \in R} d_Z(x,y).
    \]
\end{Definition}

The following proposition says that the srH distance is really the same as Hausdorff distance, without the symmetrization (this is sometimes referred to as the asymmetric Hausdorff distance). 

\begin{Proposition}\label{prop:Hausdorff_distance}
    Let $Z$ be a metric space and let $X,Y \subset Z$. Then the Hausdorff distance between $X$ and $Y$ is given by
    \[
    d_\mathrm{H}^Z(X,Y) = \max\{\dsh^Z(X,Y), \dsh^Z(Y,X)\}.
    \]
\end{Proposition}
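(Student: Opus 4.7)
The plan is to reduce the claim to showing that the unsymmetrized semi-relaxed Hausdorff distance $\dsh^Z(X,Y)$ is precisely the asymmetric Hausdorff distance $\sup_{x \in X} \inf_{y \in Y} d_Z(x,y)$. Once this identity is established, the proposition follows immediately by taking the maximum over both directions and comparing with the definition of $\dhaus^Z(X,Y)$.

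For the lower bound $\dsh^Z(X,Y) \geq \sup_{x \in X} \inf_{y \in Y} d_Z(x,y)$, I would fix an arbitrary semi-correspondence $R \in \semcorr(X,Y)$ and use the defining property that the projection $R \to X$ is surjective. Thus for every $x \in X$ there exists at least one $y_x \in Y$ with $(x, y_x) \in R$, giving $\sup_{(x,y) \in R} d_Z(x,y) \geq d_Z(x, y_x) \geq \inf_{y \in Y} d_Z(x,y)$. Taking the supremum over $x$ and then the infimum over $R$ yields the inequality.

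For the reverse inequality, let $D = \sup_{x \in X} \inf_{y \in Y} d_Z(x,y)$; if $D = \infty$ the bound is automatic, so assume $D < \infty$. For any $\epsilon > 0$, by the definition of the infimum, for each $x \in X$ I can select some $y_x \in Y$ satisfying $d_Z(x, y_x) < \inf_{y \in Y} d_Z(x,y) + \epsilon \leq D + \epsilon$. Setting $R_\epsilon = \{(x, y_x) \mid x \in X\}$ defines a semi-correspondence, since its projection to $X$ is surjective by construction, and $\sup_{(x,y) \in R_\epsilon} d_Z(x,y) \leq D + \epsilon$. Letting $\epsilon \to 0$ gives $\dsh^Z(X,Y) \leq D$.

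There is no substantial obstacle here; the only minor subtlety is the use of the axiom of choice to select the points $y_x$ (harmless in this context) and handling the degenerate case $Y = \emptyset$, in which case $\semcorr(X,Y)$ is empty unless $X$ is also empty, and the asserted equality reduces to the convention $\inf \emptyset = +\infty$ on both sides. Applying the identity $\dsh^Z(X,Y) = \sup_{x \in X} \inf_{y \in Y} d_Z(x,y)$ together with its symmetric analogue $\dsh^Z(Y,X) = \sup_{y \in Y} \inf_{x \in X} d_Z(x,y)$ and taking the maximum recovers the standard formula for $\dhaus^Z(X,Y)$, completing the proof.
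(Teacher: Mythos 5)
Your proof is correct and takes essentially the same approach as the paper: both reduce the claim to the identity $\dsh^Z(X,Y) = \sup_{x \in X} \inf_{y \in Y} d_Z(x,y)$ and prove the two inequalities directly by constructing a near-optimal semi-correspondence (you use a choice function $x \mapsto y_x$, the paper uses the sublevel relation $\{(x,y): d_Z(x,y)\leq \epsilon\}$, but these are interchangeable). Your version is in fact slightly more careful than the paper's, since the explicit $\epsilon$-slack makes it clear that no attainment of the inner infimum is needed.
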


\begin{proof}
    We wish to show that
    \[
    \dsh^Z(X,Y) = \sup_{x \in X} \inf_{y \in Y} d_Z(x,y).
    \]
    First suppose that, for every $x \in X$, there exists $y \in Y$ such that $d_Z(x,y) \leq \epsilon$. Define
    \[
    R = \{(x,y) \mid d_Z(x,y) \leq \epsilon\}.
    \]
    Then $R$ is a semi-correspondence and $\sup_{(x,y) \in R} d_Z(x,y) \leq \epsilon$, by construction. This proves that 
    $$\dsh^Z(X,Y) \leq \sup_{x \in X} \inf_{y \in Y} d_Z(x,y).$$
    On the other hand, suppose that $R \in \semcorr(X,Y)$ such that $d_Z(x,y) \leq \epsilon$ for all $(x,y) \in R$. Then, for every $x \in X$, there exists $y \in Y$ such that $d_Z(x,y) \leq \epsilon$---namely, choose any $y$ such that $(x,y) \in R$. This completes the proof.
\end{proof}

\begin{Theorem}\label{thm:embedding_formuation_GH}
    For metric spaces $X$ and $Y$, we have
    \[
    \dsgh(X,Y) = \inf_{f,g,Z} \dsh^Z(f(X),g(Y)),
    \]
    where the infimum is over metric spaces $Z$ and isometric embeddings $f:X \to Z$ and $g:Y \to Z$.
\end{Theorem}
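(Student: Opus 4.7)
The plan is to follow the structure of the classical proof that the Gromov--Hausdorff distance admits the embedding formulation \eqref{eqn:embedding_GH}, adapting each step to the semi-relaxed setting. I would establish the two inequalities $\inf_{f,g,Z} \dsh^Z(f(X),g(Y)) \leq \dsgh(X,Y)$ and $\dsgh(X,Y) \leq \inf_{f,g,Z} \dsh^Z(f(X),g(Y))$ separately.

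For the first inequality, I would start from an arbitrary semi-correspondence $R \in \semcorr(X,Y)$ with $\dis(R) = r$ and construct a metric on $Z := X \sqcup Y$ that extends $d_X$ and $d_Y$ via
\[
d(x,y) \;=\; \inf_{(x_0,y_0) \in R} \bigl( d_X(x,x_0) + r + d_Y(y_0,y) \bigr)
\]
for $x \in X$ and $y \in Y$. Verifying that this yields a (pseudo-)metric is the crux: the mixed triangle inequality $d(x,y) + d(y,x') \geq d_X(x,x')$ uses the distortion bound $|d_X(x_0,x_0') - d_Y(y_0,y_0')| \leq 2r$ on $R$, and the inequalities with one or zero summands crossing from $X$ to $Y$ are direct from the definition. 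The canonical inclusions $f:X \hookrightarrow Z$ and $g:Y \hookrightarrow Z$ are then isometric embeddings, and the semi-correspondence $R' = \{(f(x),g(y)) : (x,y) \in R\}$ is a semi-correspondence between $f(X)$ and $g(Y)$ satisfying $d_Z(f(x),g(y)) \leq r$ (by taking $(x_0,y_0) = (x,y)$ in the infimum). Hence $\dsh^Z(f(X),g(Y)) \leq r$, and passing to the infimum over $R$ yields the first inequality.

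For the reverse inequality, I would fix any metric space $Z$ with isometric embeddings $f:X \to Z$ and $g:Y \to Z$, and for any $\epsilon > 0$ select a semi-correspondence $R' \in \semcorr(f(X),g(Y))$ with $\sup_{(z,z') \in R'} d_Z(z,z') \leq \dsh^Z(f(X),g(Y)) + \epsilon$. Pulling back through the isometries, $R = \{(x,y) : (f(x),g(y)) \in R'\}$ is a semi-correspondence between $X$ and $Y$. Two applications of the triangle inequality in $Z$ give, for any $(x,y),(x',y') \in R$,
\[
|d_X(x,x') - d_Y(y,y')| \;=\; |d_Z(f(x),f(x')) - d_Z(g(y),g(y'))| \;\leq\; d_Z(f(x),g(y)) + d_Z(f(x'),g(y')),
\]
and the right-hand side is bounded by $2(\dsh^Z(f(X),g(Y)) + \epsilon)$. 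Dividing by two, taking the supremum, and letting $\epsilon \to 0$ gives $\dsgh(X,Y) \leq \dsh^Z(f(X),g(Y))$, and then minimizing over $(Z,f,g)$ completes the proof.

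The main obstacle is the careful verification of the triangle inequality for the extended metric $d$ on $X \sqcup Y$; this is precisely where the semi-relaxed structure matters, since only the projection onto $X$ is required to be surjective, and one must check that the formula still interacts correctly with points of $Y$ that may fail to be covered by $R$. Once that construction is in place, everything else reduces to standard applications of the triangle inequality and infimum manipulations.
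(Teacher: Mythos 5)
Your proof is correct and takes essentially the same route as the paper: one direction glues $X \sqcup Y$ with the identical metric $d(x,y) = \inf_{(x_0,y_0)\in R}\bigl(d_X(x,x_0) + \dis(R) + d_Y(y_0,y)\bigr)$, whose triangle inequality reduces to the distortion bound exactly as you describe, and the other direction bounds $|d_X(x,x') - d_Y(y,y')|$ by $d_Z(f(x),g(y)) + d_Z(f(x'),g(y'))$. The only cosmetic difference is that the paper forms its semi-correspondence directly as $\{(x,y) : d_Z(x,y) \leq \epsilon\}$ rather than pulling back a near-optimal $R'$, which yields the same estimate.
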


\begin{proof}
    Let $Z$ be a metric space and $f:X \to Z$ and $g:Y \to Z$ isometric embeddings such that $\dsh^Z(f(X),g(Y)) \leq \epsilon$. To simplify notation, we can assume without loss of generality that $X$ and $Y$ are actually subsets of $Z$, that $d_X = d_Z|_{X \times X}$ and $d_Y = d_Z|_{Y \times Y}$, and that $\dsh^Z(X,Y) \leq \epsilon$. Let 
    \[
    R = \{(x,y) \in X \times Y \mid d_Z(x,y) \leq \epsilon\}. 
    \]
    Then $R$ is a semi-coupling and, for any $(x,y),(x',y') \in R$, we have
    \begin{align*}
        |d_X(x,x') - d_Y(y,y')| &= |d_Z(x,x') - d_Z(y,y')| \\
        &\leq d_Z(x,y) + d_Z(x',y') \leq 2\epsilon,
    \end{align*}
    so that $\dis(R) \leq \epsilon$. This shows that $\dsgh(X,Y) \leq \inf_{f,g,Z} \dsh^Z(f(X),g(Y))$.

    Now suppose that $R$ is a semi-coupling with $\dis(R) \leq \epsilon$. Put a metric $d_Z$ on the space $Z = X \sqcup Y$ which is defined by $d_Z|_{X \times X} = d_X$, $d_Z|_{Y \times Y} = d_Y$ and 
    \[
    d_Z(x,y) = \inf_{(x',y') \in R} d_X(x,x') + d_Y(y,y') + \epsilon
    \]
    when $x \in X$ and $y \in Y$ (with $d_Z(y,x)$ defined similarly). It is straightforward to check that $d_Z$ is really a metric and we have, for $f:X \to Z$ and $g:Y \to Z$ the inclusion maps,
    \[
    \dsh^Z(f(X),g(Y)) \leq \sup_{(x,y) \in R} d_Z(x,y) = d_X(x,x) + d_Y(y,y) + \epsilon = \epsilon.
    \]
    This shows that $\dsgh(X,Y) \geq \inf_{f,g,Z} \dsh^Z(f(X),g(Y))$, so the proof is complete.
\end{proof}

\begin{Remark}
    By Theorem \ref{thm:embedding_formuation_GH}, the symmetrized semi-relaxed Gromov-Hausdorff distance is given by
    \[
    \symdsgh(X,Y) = \max \left\{\inf_{f,g,Z} \dsh^Z(f(X),g(Y)), \inf_{h,k,W} \dsh^W(h(Y),k(X))\right\}.
    \]
    This formulation of the distance compares interestingly with the embedding formulation of GH distance \eqref{eqn:embedding_GH}, which, by Proposition \ref{prop:Hausdorff_distance}, can be written as 
    \[
    \dgh(X,Y) = \inf_{f,g,Z} \max \{\dsh^Z(f(X),g(Y)), \dsh^Z(g(Y),f(X))\}.
    \]
\end{Remark}

\section{Computational details} \label{app:computing}

\subsection{Pseudocode for SRGW+GD}

See Algorithm~\ref{alg:sgrwgd} for pseudocode outlining the SRGW+GD algorithm. Note that in some cases, the metric on $Y$ may take a scale parameter, which will also be updated by gradient descent on $F$.

\begin{algorithm}
\caption{SRGW+GD}\label{alg:sgrwgd}
\begin{algorithmic}
\Require finite metric space $(X, d)$ (as a distance matrix) with $X = \{x_1, \ldots, x_n\}$
\Require discrete finite subset of target space $\hat{Y} = \{\hat{y}_1,\ldots, \hat{y}_m\} \subseteq Y$
\Require learning rate $\alpha$ 

\\

\State $ \gamma \gets $ optimal semi-coupling from $X$ to $\hat{Y}$ \Comment Equation (\ref{eqn:srGW})

\State $\forall_{1 \leq i \leq n} \ y_i \gets \hat{y}_j \ \mathrm{s.t. } \ \gamma(x_i, \hat{y}_j) > 0$ \Comment{Monge map on $x_i$}

\State $\forall_{1 \leq i \leq n} \ y_i \gets y_i + \text{small noise}$ \Comment optional step for better gradients

\\

\State let $F(\mathbf{y}) = F(y_1, \ldots, y_n) := \sum_{i,j = 1}^n (d_X(x_i, x_j) - d_Y(y_i, y_j))^2$

\Repeat 
\State $\mathbf{y} \gets \mathbf{y} - \alpha \cdot \nabla F$ \Comment gradient descent
\State $\alpha \gets  \mathrm{update}(\alpha)$ \Comment optional Adam learning rate update
\Until{convergence}

\\

\State \Return $y_1,\ldots, y_n$ 
\end{algorithmic}
\end{algorithm}

\subsection{Hyperparameters}
The two main hyperparameters for the SRGW+GD experiments are the points in the subset $S \subseteq Y$ used in the initial embedding, and the learning rate $\gamma$ for the Adam optimizer. When embedding into $\mathbb{R}^2$ with SRGW+GD, we used an evenly spaced $20\times 20$ grid whose width was twice the diameter of the input space. We chose a set of $|S| = 100$ evenly spaces points when embedding R-MNIST into $S^1$, and $|S| = 1000$ for the redistricting experiments. For the Adam optimizer, we use a learning rate of $\gamma= 0.01$ for the MNIST and redistricting experiments and $\gamma = 0.1$ for the Cities dataset, chosen to achieve good convergence. We use a relative convergence threshold of $10^{-4}$ on the loss function for SMACOF MDS and SRGW+GD. The convergence behavior for GD was harder to control, so we used a $10^{-3}$ relative convergence threshold.  

\subsection{Hardware and compute times}
The experiments in Section \ref{sec:experiments} and \ref{sec:redistricting} were performed on a 2019 MacBook Pro with 64 GB of RAM. To give an idea of the compute time involved, we report the times taken to compute the embeddings from Figure \ref{fig:scatterplots} in Supplementary Table \ref{tab:times} below. The redistricting embeddings in \ref{fig:circles} took only a few seconds each to compute.

\begin{table}[h]
    \centering
    \begin{tabular}{l|r}
        method & time (s) \\ \hline \hline
      t-SNE   & 17.367 \\ \hline
      PCA   & 0.188\\ \hline
      SMACOF MDS   & 1061.073\\ \hline
      SRGW+GD $\mathbb{R}^2$ & 193.869 \\ \hline
      CC   & 50.287\\ \hline
      GD   & 14.572 \\ \hline
      SRGW+GD $S^1$ & 137.237 \\
    \end{tabular}
    \caption{Compute times for Figure \ref{fig:scatterplots}}.
    \label{tab:times}
\end{table}

For Figure \ref{fig:scatterplots} and the versions of it below, we use the first trial of the GD runs.

\section{Details on redistricting ensemble plots}\label{app:circles}

To create the visualizations in Figure \ref{fig:circles}, we divide the circle into eight arcs of equal length, with the first arc starting at $(1,0)$ and extending counterclockwise. Assigned to each arc is a subset of the ensemble. We refer to the plan with the lowest circular coordinate in each arc as the \emph{first plan} in that arc.

\begin{Definition}
    Consider two redistricting plans $P$ and $Q$, where $P$ has labelled districts. To \emph{align} $Q$ to $P$ means to choose labels for the districts in $Q$ such that the the number of Census blockgroups whose district number must be changed to get from $P$ from $Q$ is minimized. 
\end{Definition}

To set up our visualization, we arbitrarily label the first plan in the first arc, and align all other plans in the first arc to it. For each subsequent arc, we align the first plan in that arc to the first plan in the previous arc, and then align all the other plans in the arc to the first plan in the ensemble. This gives us a coherent labelling of every plan. Finally, we summarize all the redistricting plans in an arc with a heatmap showing, for each precinct $p$, the fraction of plans which assigned $p$ to District 1. The results are shown in Figure \ref{fig:circles} for all six states, with the heatmaps arranged near the arcs they summarize. 

\FloatBarrier

\section{Embeddings of other digits}\label{app:otherdigits}

The results of the rotated MNIST digit experiment for the remaining digits are shown in Supplementary Figures \ref{fig:map0}--\ref{fig:map8}.

\foreach \n in {0,1,2,3,4,5,6,7,8}
{
\begin{figure}
\centering
\resizebox{0.95\textwidth}{!}{
    \begin{tikzpicture}[scale=1.0]
        \node[rotate=90] at (-1.1,-2.5) {\tiny angular coord.};
        \node at (0.1,-3.6) {\tiny rotation angle};
        \node at (13.7,-1.5) {\includegraphics[width=1.3cm]{MNIST/colorbar.pdf}};
        
        \node at (0,1) {t-SNE};
        \node at (0,-0.3) 
        {\includegraphics[height=2cm]{MNIST/MNIST\n_TSNE.png}};
        \node at (0,-2.5) 
        {\includegraphics[height=2cm]{MNIST/MNIST\n_scatter_TSNE.png}};
        
        \node at (2,1) {PCA};
        \node at (2,-0.3) 
        {\includegraphics[height=2cm]{MNIST/MNIST\n_PCA.png}};
        \node at (2,-2.5) {\includegraphics[height=2cm]{MNIST/MNIST\n_scatter_PCA.png}};
        
        \node at (4,1) {MDS};
        \node at (4,-0.3) 
        {\includegraphics[height=2cm]{MNIST/MNIST\n_MDS.png}};
        \node at (4,-2.5) {\includegraphics[height=2cm]{MNIST/MNIST\n_scatter_MDS.png}};

        \node at (6,1) {SRGW+GD};
        \node at (6,-0.3) 
        {\includegraphics[height=2cm]{MNIST/MNIST\n_SRGW_R2.png}};
        \node at (6,-2.5) {\includegraphics[height=2cm]{MNIST/MNIST\n_scatter_SRGW_R2.png}};
        
        \node at (8,1) {CC};
        \node at (8,-0.3) 
        {\includegraphics[height=2cm]{MNIST/MNIST\n_PCOH.png}};
        \node at (8,-2.5) {\includegraphics[height=2cm]{MNIST/MNIST\n_scatter_PCOH.png}};
        
        \node at (10,1) {GD};
        \node at (10,-0.3) 
        {\includegraphics[height=2cm]{MNIST/MNIST\n_GD.png}};
        \node at (10,-2.5) {\includegraphics[height=2cm]{MNIST/MNIST\n_scatter_GD.png}};

        \node at (12,1) {SRGW+GD};
        \node at (12,-0.3) 
        {\includegraphics[height=2cm]{MNIST/MNIST\n_SRGW.png}};
        \node at (12,-2.5) {\includegraphics[height=2cm]{MNIST/MNIST\n_scatter_SRGW.png}};
    \end{tikzpicture}
    }
    \caption{Circle and planar embeddings for images of \n s in the form of Figure \ref{fig:scatterplots}.}\label{fig:map\n}
\end{figure}
}

\FloatBarrier

\section{Other embedding methods on redistricting data}

We embed the redistricting ensembles using various other non-linear dimension reduction methods, including \textbf{Laplacian Eigenmaps} with default \texttt{sklearn} settings and \textbf{Isomap}.

\foreach \state in {ID, ME, MT, NH, RI, WV}
{
\begin{figure}[ht]
    \centering
    \begin{subfigure}{0.16\textwidth}
    \centering
        \includegraphics[height=2.2cm, width=2cm, keepaspectratio]{redistricting/ensemble_\state_SRGW.png}
        \caption*{SRGW+GD}
    \end{subfigure}
    \begin{subfigure}{0.16\textwidth}
    \centering
        \includegraphics[height=2.2cm, width=2cm, keepaspectratio]{redistricting/ensemble_\state_MDS.png}
        \caption*{MDS}
    \end{subfigure}
    \begin{subfigure}{0.16\textwidth}
    \centering
        \includegraphics[height=2.2cm, width=2cm, keepaspectratio]{redistricting/ensemble_\state_Laplacian.png}
        \caption*{Laplacian}
    \end{subfigure}
    \begin{subfigure}{0.16\textwidth}
    \centering
        \includegraphics[height=2.2cm, width=2cm, keepaspectratio]{redistricting/ensemble_\state_TSNE.png}
        \caption*{t-SNE}
    \end{subfigure}
    \begin{subfigure}{0.16\textwidth}
    \centering
        \includegraphics[height=2.2cm, width=2cm, keepaspectratio]{redistricting/ensemble_\state_Isomap.png}
        \caption*{Isomap}
    \end{subfigure}
    \begin{subfigure}{0.16\textwidth}
    \centering
        \includegraphics[height=2.2cm, width=2cm, keepaspectratio]{redistricting/ensemble_\state_PCOH.png}
        \caption*{CC}
    \end{subfigure}
    \caption{Different embeddings for a 1,000 plan ensemble on \state. Color indicates the circular coordinate as determined by SRGW+GD.}
    \label{fig:others}
\end{figure}
}

\end{document}